\newtheorem{theorem}{Theorem}[section]
\newtheorem{Algorithm}{Algorithm}[section]
\newtheorem{corollary}{Corollary}[section]
\newtheorem{lemma}{Lemma}[section]
\newtheorem{remark}{Remark}[section]
\numberwithin{equation}{section}
\newcommand{\norm}[1]{\left\Vert#1\right\Vert}
\newcommand{\bu}{{\bf u}}
\newcommand{\bv}{{\bf v}}
\newcommand{\bw}{{\bf w}}
\newcommand{\bx}{{\bf x}}
\newcommand{\be}{{\bf e}}
\newcommand{\bff}{{\bf f}}
\newcommand{\bphi}{{\boldsymbol \phi}}
\newcommand{\bfX}{{\bf X}}\newcommand{\bfH}{{\bf H}}
\newcommand{\bfV}{{\bf V}}
\newcommand{\bfL}{{\bf L}}
\newcommand{\bpsi}{{\boldsymbol \psi}}
\newcommand{\bfeta}{{\boldsymbol \eta}}
\newcommand{\btheta}{{\boldsymbol \theta}}
\newcommand*\diff{\mathop{}\!\mathrm{d}}
\newcommand{\beps}{{\boldsymbol  \varepsilon}}
\title{Stokes with variable viscosity}
\renewcommand\expandafter\subsection\expandafter{%
		\expandafter\@fb@secFB\subsection
	}%
\date{}
\title{A Modular Regularized Variational Multiscale Proper Orthogonal Decomposition for Incompressible Flows }
\author{
Fatma G. Eroglu
\thanks{Department of Mathematics, Middle East Technical University, 06800 Ankara, Turkey; fguler@metu.edu.tr}
\and
Songul Kaya
\thanks{
Department of Mathematics, Middle East Technical University, 06800 Ankara, Turkey; smerdan@metu.edu.tr}
\and
Leo G. Rebholz
\thanks{Department of Mathematical Sciences, Clemson University, Clemson, SC, 29634; rebholz@clemson.edu; Partially supported by NSF DMS1522191 and U.S. Army Grant 65294-MA.}
}
\begin{document}

\maketitle

\begin{abstract} In this paper, we propose, analyze and test a post-processing implementation of a projection-based variational multiscale  (VMS) method
with proper orthogonal decomposition (POD) for the incompressible Navier-Stokes equations.  The projection-based VMS stabilization is added as a separate post-processing step to the standard POD approximation, and since the stabilization step is completely decoupled, the method can easily be incorporated into existing codes, and stabilization parameters can be tuned independent from the time evolution step.  We present a theoretical analysis of the method, and give results for several numerical tests on benchmark problems which both illustrate the theory and show the proposed method's effectiveness.
\end{abstract}

Keywords: proper orthogonal decomposition, projection-based variational multiscale, reduced order models, post-processing\\

\section{Introduction}


We consider the incompressible Navier-Stokes equations (NSE) on a polyhedral domain $\Omega \subset \mathbb R^d, d\in\{2,3\}$ with boundary $\partial\Omega$:
 \begin{equation}\label{nse}
 \begin {array}{rcll}
\bu_t -\nu \Delta \bu+ (\bu\cdot\nabla)\bu + \nabla p &=& \bff  &
 \mathrm{in }\ (0,T]\times \Omega, \\
  \nabla \cdot \bu&=& 0& \mathrm{in }\ [0,T]\times  \Omega,\\
\bu&=& \mathbf{0}& \mathrm{in }\ [0,T]\times\partial \Omega,\\
\bu(0,\bx) & = & \bu_0 & \mathrm{in }\ \Omega, \\
\displaystyle \int_{\Omega}p \ d \bx&=&0,&  \mathrm{in }\ (0,T].
 \end{array}
\end{equation}
Here, $\bu(t,\bx)$ is the fluid velocity and $p(t,\bx)$ the fluid
pressure. The parameters in \eqref{nse} are the kinematic viscosity $\nu >0$,
the prescribed body forces $\bff(t,\bx)$ and the initial velocity field $\bu_0(\bx)$.

It is known that due to the wide range of scales in many complex fluid flows, simulating these flows by a direct numerical simulation (DNS) can be very expensive, and sometimes is even infeasible.  In particular, in the engineering design process, flow simulations must be run many times, e.g.  to perform parameter studies or for system control purposes; this multiplies the DNS cost by at least several times.  The concept of reduced order models was introduced as a way of lowering the computational complexity in such settings.  The proper orthogonal decomposition (POD) approach has proven to be quite successful for generating reduced order models \cite{L87a,L87b,L87c} that capture many (sometimes most) of the dominant flow features, and has been applied in many areas such as image processing, pattern recognition, unsteady aerodynamic applications etc., see e.g. \cite{B93,C00,L03}. POD reduces the complexity of systems, often by orders of magnitude, by representing it with only its most energetic structures.

However, it is well known that for certain problems, and incompressible NSE in particular, a POD by itself can perform quite poorly without some sort of numerical stabilization \cite{ALT93,TZ12}.  One type of stabilization that has been successful in this endeavor is the combining of POD with the variational multiscale (VMS) method.  VMS was introduced in \cite{Hu95} in a variational setting, and it (and its variants) have been studied extensively in finite element frameworks \cite{Gue99,Lay02,jokaya1,jokaya2,jokaya3}.  Using VMS in POD was pioneered in \cite{TZ12,TZ13,JPR13}, and their studies showed this could increased numerical accuracy for convection-dominated convection-diffusion equations \cite{TZ13} and for NSE \cite{TZ12,JPR13}.  Furthermore, in \cite{TZ13}, an analysis was performed that showed 
optimal error bounds could be obtained (in terms of mesh width, time step size, and eigenvalues and eigenvectors removed from the system)
for convection-diffusion-reaction VMS-POD systems.

The main objective of this report is to extend the novel ideas of \cite{LRT11} to the POD setting, in particular to create a VMS-POD where stabilization is added as a completely decoupled second step, in a time stepping scheme for incompressible flow simulation.  That is, at each step there is a two step procedure at each time step: step 1 evolves with a standard POD (i.e. unstabilized Galerkin POD), and then the second step is a weighted POD projection that adds (in a sense) extra viscosity to the lower POD modes (hence the method can be easily incorporated into a standard Galerkin POD code).  We formally introduce the method in section 3 (after giving necessary notation and preliminaries in section 2), and prove it is stable and optimally convergent (in terms of the mesh width, time step size, and removed POD modes/eigenvalues).  In section 4 we provide extensive numerical results that illustrate the effectiveness of the method.  Finally, conclusions are given in section 5.

\section{Preliminaries and Notations}

We assume that $\Omega \subset \mathbb{R}^d,\,d=2,3$ is a polygonal or polyhedral domain, with boundary $\partial \Omega$. Throughout the paper standard notations for Sobolev spaces and their norms will be used. The norm in $(H^k(\Omega))^d$ is denoted by $\|\cdot\|_k$ and the norms in Lebesgue spaces
$(L^p(\Omega))^d$, $1\leq p < \infty$, $p\neq 2$ by $\|\cdot\|_{L^p}$. The space $L^2(\Omega)$ is equipped with the norm and inner product $\|\cdot\|$ and $(\cdot, \cdot)$, respectively. 

The continuous velocity and pressure spaces are denoted by $\bfX= (H^1_0(\Omega))^d $ and $Q=L_0^2(\Omega)$, and we denote the dual space of $H_0^1(\Omega)$ by $H^{-1}$ with norm
\[
\norm{\bff }_{-1}  = \sup_{\bv\in{X}}
\frac{|(\bff,\bv)|}{\norm{\nabla \bv }}.
\] 

We use the following notation for discrete norms, for $\bv^n \in \bfH^p(\Omega)$, n=0,1,2,...,M:
\begin{equation*}
|||\bv|||_{\infty,p}:=\max_{0\leq n\leq M}\|\bv^n\|_p \quad |||\bv|||_{m,p}:=\bigg(\Delta t \sum_{n=0}^M \|\bv^n\|_p^m  \bigg)^{1/m}.
\end{equation*}
%

The variational formulation of (\ref{nse}) reads as follows: Find $\bu \ :\ (0,T]\to
{\bf X}, \ p
\ : \ (0,T]\to Q$ satisfying
\begin{equation}\label{weak_nse_full}
\begin{array}{rcll}
(\bu_t,\bv) + (\nu\nabla{\bu},\nabla{\bv}) + b(\bu,\bu,\bv)
-(p,\nabla\cdot\bv) &= &(\bff,\bv) & \forall \ \bv\in \bfX, \\
(q,\nabla\cdot\bu) & = & 0 & \forall \ q\in Q,
\end{array}
\end{equation}
with $\bu(0,\bx) = \bu_0(\bx) \in \bfX$ and 
$$
b(\bu,\bv,\bw) =
\frac{1}{2}\left(((\bu\cdot\nabla)\bv,\bw)-((\bu\cdot\nabla)\bw,\bv)
\right)
$$
is the skew-symmetric form of the convective term.

The following properties for the skew symmetric form are well known \cite{GR79,WJL8}.
\begin{lemma} \label{l:ssb}
	The trilinear skew-symmetric form $b(\bu,\bv,\bw)$ satisfies  
	\begin{eqnarray*}
	b(\bu,\bv,\bw)&\leq& C(\Omega) \sqrt{\|\bu\|\|\nabla \bu\|}\|\nabla \bv\|\|\nabla \bw\|,\label{ssb}
	\\
	b(\bu,\bv,\bw)&\leq& C (\Omega) \|{\nabla \bu}\|\|{\nabla \bv}\|\|{\nabla \bw}\|.    \label{tri1}  
	\end{eqnarray*}
\end{lemma}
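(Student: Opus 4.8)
The plan is to reduce everything to a standard product-of-norms estimate using Hölder's inequality together with the Sobolev and Ladyzhenskaya inequalities on $\Omega \subset \mathbb{R}^d$, $d \in \{2,3\}$. First I would observe that, since $b(\bu,\bv,\bw)$ is a linear combination of the two terms $((\bu\cdot\nabla)\bv,\bw)$ and $((\bu\cdot\nabla)\bw,\bv)$, and each of these has exactly the same structure up to relabeling of $\bv$ and $\bw$, it suffices to bound a single term of the form $|((\bu\cdot\nabla)\bv,\bw)|$ in terms of $\|\bu\|$, $\|\nabla\bu\|$, $\|\nabla\bv\|$, $\|\nabla\bw\|$; both inequalities then follow by symmetry and the triangle inequality (the factor $\tfrac12$ is absorbed into the constant $C(\Omega)$). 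By Hölder's inequality,
\[
|((\bu\cdot\nabla)\bv,\bw)| \le \|\bu\|_{L^4}\,\|\nabla\bv\|\,\|\bw\|_{L^4}.
\]

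Next I would control the $L^4$ norms. For the first claimed bound, I would invoke the Ladyzhenskaya / Gagliardo–Nirenberg inequality $\|\bv\|_{L^4} \le C(\Omega)\,\|\bv\|^{1-d/4}\|\nabla\bv\|^{d/4}$, which in both $d=2$ and $d=3$ gives $\|\bv\|_{L^4} \le C(\Omega)\,\|\bv\|^{1/2}\|\nabla\bv\|^{1/2}$ in $d=2$ and $\le C(\Omega)\,\|\bv\|^{1/4}\|\nabla\bv\|^{3/4}$ in $d=3$; in either case, using the Poincaré–Friedrichs inequality $\|\bw\| \le C(\Omega)\|\nabla\bw\|$ on the factor $\|\bw\|_{L^4}$ (to convert its $\|\bw\|$ part into $\|\nabla\bw\|$) and keeping $\|\bu\|^{1/2}\|\nabla\bu\|^{1/2}$ intact, one arrives at
\[
|((\bu\cdot\nabla)\bv,\bw)| \le C(\Omega)\sqrt{\|\bu\|\,\|\nabla\bu\|}\,\|\nabla\bv\|\,\|\nabla\bw\|,
\]
which is the first inequality. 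For the second inequality, I would instead apply the Sobolev embedding $H^1_0(\Omega) \hookrightarrow L^4(\Omega)$ (valid for $d \le 4$, hence for $d\in\{2,3\}$) directly: $\|\bu\|_{L^4} \le C(\Omega)\|\nabla\bu\|$ and $\|\bw\|_{L^4} \le C(\Omega)\|\nabla\bw\|$, which plugged into the Hölder bound yields
\[
|((\bu\cdot\nabla)\bv,\bw)| \le C(\Omega)\,\|\nabla\bu\|\,\|\nabla\bv\|\,\|\nabla\bw\|.
\]

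Finally I would assemble the two pieces: applying each of the two estimates above to both $((\bu\cdot\nabla)\bv,\bw)$ and $((\bu\cdot\nabla)\bw,\bv)$ and using $|b(\bu,\bv,\bw)| \le \tfrac12(|((\bu\cdot\nabla)\bv,\bw)| + |((\bu\cdot\nabla)\bw,\bv)|)$ gives the two claimed bounds. The only mildly delicate point — the ``main obstacle'' such as it is — is keeping the dimension-dependence of the Ladyzhenskaya inequality straight, since the exponents in $\|\bv\|_{L^4}\le C\|\bv\|^{1-d/4}\|\nabla\bv\|^{d/4}$ differ between $d=2$ and $d=3$; the key point that makes the first inequality uniform in $d$ is that one only needs $\|\bu\|_{L^4}\lesssim \|\bu\|^{1/2}\|\nabla\bu\|^{1/2}$ (true for $d=2$) or the weaker $\|\bu\|_{L^4}\lesssim\|\nabla\bu\|$ combined with $\|\bu\|^{1/2}\|\nabla\bu\|^{1/2}\lesssim\|\nabla\bu\|$ via Poincaré (true for $d=3$), and in the $d=3$ case one spends the extra regularity on the $\bw$ factor. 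Everything else is a routine application of Hölder and standard embeddings, and the constants depend only on $\Omega$ (through the Poincaré and Sobolev constants), as asserted. This is exactly the argument in \cite{GR79}.
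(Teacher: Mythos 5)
The paper itself does not prove this lemma; it simply cites \cite{GR79,WJL8}, so your proposal must stand on its own. Your overall strategy (bound each of the two terms in $b$ by H\"older plus Sobolev/Ladyzhenskaya embeddings) is the standard one, and it correctly delivers the second inequality in both $d=2$ and $d=3$, as well as the first inequality in $d=2$.

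There is, however, a genuine gap in your treatment of the first inequality when $d=3$. The H\"older split $|((\bu\cdot\nabla)\bv,\bw)|\le \|\bu\|_{L^4}\|\nabla\bv\|\|\bw\|_{L^4}$ combined with the three-dimensional Ladyzhenskaya inequality gives $\|\bu\|_{L^4}\le C\|\bu\|^{1/4}\|\nabla\bu\|^{3/4}$, and there is no way to convert $\|\bu\|^{1/4}\|\nabla\bu\|^{3/4}$ into $\sqrt{\|\bu\|\|\nabla\bu\|}$: that would require $\|\nabla\bu\|\le C\|\bu\|$, a reverse Poincar\'e inequality which is false on $H^1_0(\Omega)$. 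Your proposed workaround --- using $\|\bu\|_{L^4}\lesssim\|\nabla\bu\|$ together with $\sqrt{\|\bu\|\|\nabla\bu\|}\lesssim\|\nabla\bu\|$ --- runs Poincar\'e in the wrong direction: since $\sqrt{\|\bu\|\|\nabla\bu\|}\le C\|\nabla\bu\|$, the first inequality is the \emph{stronger} of the two, and your argument only recovers the weaker second bound. The standard repair in $d=3$ is to change the H\"older exponents to $(3,2,6)$, i.e.
\[
|((\bu\cdot\nabla)\bv,\bw)|\le \|\bu\|_{L^3}\,\|\nabla\bv\|\,\|\bw\|_{L^6},
\]
then interpolate $\|\bu\|_{L^3}\le \|\bu\|^{1/2}\|\bu\|_{L^6}^{1/2}\le C(\Omega)\|\bu\|^{1/2}\|\nabla\bu\|^{1/2}$ and use the embedding $H^1_0(\Omega)\hookrightarrow L^6(\Omega)$ on the last factor; the companion term $((\bu\cdot\nabla)\bw,\bv)$ is handled identically with $\bv$ and $\bw$ exchanged. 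With that substitution your proof is complete.
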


This paper considers a conforming finite element method for \eqref{weak_nse_full}, with spaces $\bfX^h\subset \bfX$ and $Q^h \subset 
Q$ satisfying the inf-sup condition: there is a constant
$\beta$ independent of the mesh size $h$ such that
\begin{eqnarray}
\inf_{q_h\in{Q}^h}\sup_{\bv_h\in {\bfX}^h}\frac{(q_h,\,\nabla\cdot
\bv_h)}{||\,\nabla \bv_h\,||\,||\,q_h\,||}\geq \beta > 0.
\label{infsup}
\end{eqnarray}
It will also be assumed for simplicity that the finite element spaces $\bfX^h$ and $Q^h$ are composed of piecewise polynomials of degrees at most $m$ and $m-1$, respectively (the analysis can be extended without significant difficulty to any inf-sup stable pair).
In addition, we assume that the spaces satisfy the following approximation properties 
 \begin{eqnarray}
 \inf_{\bv_h\in{X}^h} \left( \|(\bu-\bv_h)\|+h\|\nabla(\bu-\bv_h)\| \right)&\leq&
C h^{m+1} \| \bu \|_{m+1} \label{ap1}
\\
\inf_{q_h\in{Q}^h}\|p-q_h\|&\leq & Ch^m \|p\|_{m} \label{ap2}
\end{eqnarray}
for $\bu \in \bfX\cap {\bf H}^{m+1}(\Omega)$ and $p\in Q\cap H^{m}(\Omega)$.

We denote the discretely divergence free space by
\begin{equation}
\bfV^h= \lbrace \bv_h \in \bfX^h: (\nabla \cdot \bv_h, q_h)=0, \forall q_h \in Q^h\rbrace,
\end{equation}
The inf-sup condition (\ref{infsup}) implies that the space $\bfV^h$ is closed subspace of $\bfX^h$ and the 
formulation in $\bfX^h$ is equivalent to $\bfV^h$.
Thus, the Galerkin finite element approximation of (\ref{weak_nse_full}) in $\bfV^h$ has the  
following form: Find $\bu_h \in \bfV^h$ satisfying
\begin{equation}\label{fe_weak_nse_full}
\begin{array}{rcll}
(\bu_{h,t},\bv_h) + (\nu\nabla{\bu_h},\nabla{\bv_h}) + b(\bu_h,\bu_h,\bv_h) = (\bff,\bv_h), \quad \forall \ \bv_h\in \bfV^h.
\end{array}
\end{equation}

\subsection{POD preliminaries}


In this section the essentials of the POD are described. For a more detailed description of the method, see e.g. \cite{H96}.  With the same notation of \cite{TZ12}, consider the finite number of the time instances, 
$$\mathcal{R}= span\{\bu(\cdot,t_1),\dots,\bu(\cdot,t_M)\}$$ 
at time $t_i=i\Delta t$, $i=1,\dots,M$ and let $\Delta t=\frac{T}{M}$, where $rank(\mathcal{R})=d$. In what follows, these time instances will be assumed to come from a DNS computed with a finite element spatial discretization.  That is, we replace $\bu(\cdot,t_k)$ with DNS computed solution $\bu_h(\cdot,t_k)$.

Let $\{\bpsi_1,\bpsi_2,\dots,\bpsi_r\}$ be the low-dimensional ordered POD basis functions to approximate these time instances. The low-dimensional space is obtained by solving the minimization problem of 
\begin{equation}\label{minpod}
\min \frac{1}{M}\sum_{k=1}^{M}\|\bu(\cdot,t_k)-\sum_{i=1}^{r}(\bu(\cdot,t_k),\bpsi_i(\cdot)) \bpsi_i(\cdot)\|^2,
\end{equation}
such that $(\bpsi_i,\bpsi_j)=\delta_{ij}$, $1\leq i,j \leq r$ and $r<<d$.
The solution of the problem (\ref{minpod}) yields
\begin{eqnarray}\label{solnpod}
\bpsi_l(\cdot)
=\frac{1}{\sqrt{\lambda_l}}\sum_{i=1}^{M}(\bv_l)_i \bu(\cdot,t_i), \quad 1\leq l \leq r, 
\end{eqnarray}
where $(\bv_l)_i$ is the $i^{th}$ component of the eigenvector $\bv_l$ corresponding to $\lambda_l$ which is the eigenvalue of the snapshots correlation matrix. We note that all eigenvalues are sorted in descending orders. Thus, the basis functions $\{\bpsi_1,\bpsi_2,\dots,\bpsi_r\}$ correspond to the first $r$ largest eigenvalues. 
Moreover, the error estimation satisfies (see \cite{KV01}):
\begin{eqnarray}\label{slam}
\frac{1}{M}\sum_{k=1}^{M}\|\bu(\cdot,t_k)-\sum_{i=1}^{r}(\bu(\cdot,t_k),\bpsi_i(\cdot)) \bpsi_i(\cdot)\|^2=\sum_{i=r+1}^{d} \lambda_i.
\end{eqnarray}   
Let $\bfX^r = span\{ \bpsi_1,\bpsi_2,\dots,\bpsi_r\}$ be the POD-space, then POD-Galerkin (POD-G) formulation of the NSE is : Find $\bu_r\in\bfX^r$ satisfying
\begin{equation}\label{fe_proper}
\begin{array}{rcll}
(\bu_{r,t},\bpsi) + (\nu\nabla{\bu_r},\nabla\bpsi) + b(\bu_r,\bu_r,\bpsi) = (\bff,\bpsi), \quad \forall \ \bpsi\in \bfX^r.
\end{array}
\end{equation}
Note that the POD-G solution of the NSE is constructed  by writing, $$\bu_r(\bx,t):=\sum_{i=1}^{r} a_j(t)\bpsi_j(\bx),$$ 
where $a_j(t)$ are time varying coefficients. 

To carry out the error analysis, we state the following error estimations without their proofs. Let $M_r$ and $S_r$ denote the POD mass matrix and stiffness matrix, respectively, with
\begin{eqnarray*}
(M_r)_{i,j}&=&\int_{\Omega}\bpsi_j\bpsi_i \diff \bx
\\
(S_r)_{i,j}&=& \int_{\Omega}\nabla \bpsi_j\cdot \nabla \bpsi_j \diff \bx
\end{eqnarray*}  
 Note that, since we use $L^2(\Omega)$ to generate snapshots, $M_r=I_{r\times r}$.

Our analysis will utilize the following POD inequalities, proven in \cite{KV01}: for all $\bu_r \in \bfX^r$,
\begin{eqnarray}
\|\nabla \bu_r\|&\leq& \|S_r\|_2^{\frac{1}{2}}\|\bu_r\|,
\end{eqnarray}\label{gradsr}
where $\|\cdot\|_2$ denotes the matrix $2$-norm.

The results of the following lemmas will be applied to bound the POD projection error. The proofs can be found in \cite{TZ12}.

\begin{lemma} 
	The error in POD projection for the snapshots $\bu_h(\cdot,t_k)$, $k=1,\dots, M$ satisfies 
	\begin{eqnarray}
	\frac{1}{M}\sum_{k=1}^{M} \|\bu_h(\cdot,t_k)-\sum_{m=1}^{r}(\bu_h(\cdot,t_k),\bpsi_m(\cdot))\bpsi_m(\cdot)\|_1^2=\sum_{m=r+1}^{d}\|\bpsi_m\|^2_1 \lambda_m. \label{fb}
	\end{eqnarray}
\end{lemma}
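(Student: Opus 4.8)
The plan is to rewrite the averaged projection error entirely in terms of the eigenvalues and eigenvectors of the snapshot correlation matrix, exactly as in the derivation of the $L^2$ identity \eqref{slam}, and then to exploit the averaging $\frac1M\sum_{k=1}^M$ to annihilate every $H^1$ cross term. Since $rank(\mathcal{R})=d$, the $L^2(\Omega)$-orthonormal family $\{\bpsi_1,\dots,\bpsi_d\}$ is a basis of $\mathcal{R}$, so each snapshot has the exact expansion $\bu_h(\cdot,t_k)=\sum_{m=1}^d(\bu_h(\cdot,t_k),\bpsi_m)\bpsi_m$, and therefore the POD projection error is precisely the tail of this expansion,
\[
\bu_h(\cdot,t_k)-\sum_{m=1}^r(\bu_h(\cdot,t_k),\bpsi_m)\bpsi_m=\sum_{m=r+1}^d(\bu_h(\cdot,t_k),\bpsi_m)\bpsi_m .
\]

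Next I would evaluate the coefficients: taking the $L^2$ inner product of \eqref{solnpod} with $\bu_h(\cdot,t_k)$ and using that $\bv_m$ is a unit eigenvector of the snapshot correlation matrix with eigenvalue $\lambda_m$ shows that $(\bu_h(\cdot,t_k),\bpsi_m)$ is a fixed multiple of $\sqrt{\lambda_m}\,(\bv_m)_k$. Substituting the tail into $\|\cdot\|_1^2$, expanding the square in the $H^1$ inner product, and interchanging the finite sums produces a double sum over $m,n\ge r+1$ whose coefficient carries the factor $\sum_{k=1}^M(\bv_m)_k(\bv_n)_k$. Because the eigenvectors $\{\bv_m\}$ are Euclidean-orthonormal, this inner sum equals $\delta_{mn}$, so the double sum collapses to its diagonal and, after the powers of $M$ cancel,
\[
\frac1M\sum_{k=1}^M\Big\|\sum_{m=r+1}^d(\bu_h(\cdot,t_k),\bpsi_m)\bpsi_m\Big\|_1^2=\sum_{m=r+1}^d\lambda_m\,\|\bpsi_m\|_1^2 ,
\]
which is the assertion. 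The conceptual point — and the thing worth stressing — is that the cross terms are removed by the orthonormality of the $\bv_m$, \emph{not} by any $H^1$-orthogonality of the $\bpsi_m$ (which does not hold in general), so summing over all $M$ snapshots is indispensable; for a single $k$ the cross terms would survive.

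The only step that genuinely needs care is the normalization bookkeeping: where the factor $1/M$ sits (in the definition of the correlation matrix versus inside \eqref{solnpod}) must be tracked consistently so that the $M$'s cancel and no spurious constant is left over. Apart from that the argument is a short \emph{exact} computation — there are no inequalities or approximations — structurally identical to the proof of \eqref{slam} with the $L^2$ inner product replaced by the $H^1$ inner product throughout.
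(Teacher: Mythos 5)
Your proof is correct and is essentially the standard argument: the paper itself does not prove this lemma but defers to \cite{TZ12}, and the proof there is exactly your computation --- expand each snapshot's projection error as the tail $\sum_{m=r+1}^{d}(\bu_h(\cdot,t_k),\bpsi_m)\bpsi_m$ of the $L^2$-orthonormal POD expansion, expand $\|\cdot\|_1^2$, and collapse the cross terms using $\frac{1}{M}\sum_{k=1}^{M}(\bu_h(\cdot,t_k),\bpsi_m)(\bu_h(\cdot,t_k),\bpsi_n)=\lambda_m\delta_{mn}$, which follows from the eigenvector relation as you indicate. Your closing caveat is the right one to flag: whether the factor $1/M$ lives in the correlation matrix or in the normalization of the $\bv_m$ must be kept consistent with the convention that makes \eqref{slam} hold, but this is bookkeeping and does not affect the validity of the argument.
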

The proof of the finite element error estimate consists of the splitting the error into an approximation term and a finite element remainder term. To decompose the error term we use the $L^2$ projection of $\bu$, which fulfills certain interpolation estimates. Lemma \ref{errpro} estimates the error between the snapshots and their $L^2$ projection into $\bfX^r$. 

Let $P_r$ denote a projection operator $P_r: \bfL^2\rightarrow \bfX^r$ that satisfies 
\begin{eqnarray}
(\bu-P_r\bu,\bpsi_r)=0,\quad \forall \bpsi_r\in\bfX^r. \label{l2pro}
\end{eqnarray}


\begin{lemma} \label{errpro} 
For $\bu^n$ the true NSE solution at time $t^n$, the difference $\bu^n-P_r\bu^n$ satisfies 
\begin{eqnarray}
\frac{1}{M}\sum_{n=1}^{M}\|\bu^n-P_r\bu^n\|^2&\leq& C\Big(h^{2m+2}\frac{1}{M}\sum_{n=1}^{M}\|\bu^n\|^2_{m+1}+\sum_{i=r+1}^{d}\lambda_i\Big),
	\\
	\frac{1}{M}\sum_{n=1}^{M}\|\nabla (\bu^n-P_r\bu^n)\|^2&\leq& C\Big((h^{2m}+\|S_r\|_2h^{2m+2})\frac{1}{M}\sum_{n=1}^{M}\|\bu^n\|^2_{m+1}+\sum_{i=r+1}^{d}\|\bpsi_i\|_1^2\lambda_i\Big).
	\end{eqnarray}
\end{lemma}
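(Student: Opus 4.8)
The plan is to use the DNS snapshots $\bu_h^n:=\bu_h(\cdot,t^n)\in\bfV^h$ as an intermediate quantity and to write
\[
\bu^n-P_r\bu^n \;=\; \big(\bu^n-\bu_h^n\big)\;+\;\big(\bu_h^n-P_r\bu_h^n\big)\;+\;P_r\big(\bu_h^n-\bu^n\big),
\]
which separates the error into (i) a pure finite element error, (ii) a pure POD truncation error, and (iii) the $L^2$ projection of the finite element error. Pieces (i) and (iii) are controlled by the standard (semidiscrete) finite element error estimate for \eqref{fe_weak_nse_full}, namely $\frac1M\sum_{n=1}^M\|\bu^n-\bu_h^n\|^2\le Ch^{2m+2}\frac1M\sum_{n=1}^M\|\bu^n\|_{m+1}^2$ together with the analogous $H^1$ bound carrying $h^{2m}$; this is the only external ingredient, and it is precisely where the assumed regularity $\bu^n\in\bfH^{m+1}(\Omega)$ and the approximation property \eqref{ap1} enter. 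Piece (ii) is handled by the POD projection identities already recalled: since $\{\bpsi_m\}$ is $L^2$-orthonormal one has $\sum_{m=1}^r(\bu_h^n,\bpsi_m)\bpsi_m=P_r\bu_h^n$, so \eqref{slam} gives $\frac1M\sum_{n=1}^M\|\bu_h^n-P_r\bu_h^n\|^2=\sum_{i=r+1}^d\lambda_i$ and \eqref{fb} gives $\frac1M\sum_{n=1}^M\|\nabla(\bu_h^n-P_r\bu_h^n)\|^2\le\sum_{i=r+1}^d\|\bpsi_i\|_1^2\lambda_i$.

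For the $L^2$ bound one applies the triangle inequality to the splitting, uses $L^2$-stability of the orthogonal projection, $\|P_r(\bu_h^n-\bu^n)\|\le\|\bu_h^n-\bu^n\|$, squares, averages over $n$, and inserts the two estimates above: pieces (i) and (iii) together contribute $Ch^{2m+2}\frac1M\sum_{n=1}^M\|\bu^n\|_{m+1}^2$ and piece (ii) contributes $\sum_{i=r+1}^d\lambda_i$, which is the first assertion.

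For the $H^1$ bound the delicate point — and the main obstacle — is that $P_r$ is only $L^2$-orthogonal and is therefore \emph{not} $H^1$-stable, so $\nabla P_r(\bu_h^n-\bu^n)$ cannot be estimated directly. Here I would invoke the POD inverse estimate $\|\nabla\bw_r\|\le\|S_r\|_2^{1/2}\|\bw_r\|$ with $\bw_r=P_r(\bu_h^n-\bu^n)\in\bfX^r$, obtaining $\|\nabla P_r(\bu_h^n-\bu^n)\|\le\|S_r\|_2^{1/2}\|\bu_h^n-\bu^n\|$. The reason for choosing \emph{this} splitting is exactly that the factor $\|S_r\|_2^{1/2}$ then multiplies only the (small, $\mathcal{O}(h^{m+1})$) finite element error, while the POD truncation piece $\nabla(\bu_h^n-P_r\bu_h^n)$ keeps its sharp bound from \eqref{fb} with no $\|S_r\|_2$ attached. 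Squaring $\|\nabla(\bu^n-P_r\bu^n)\|\le\|\nabla(\bu^n-\bu_h^n)\|+\|\nabla(\bu_h^n-P_r\bu_h^n)\|+\|S_r\|_2^{1/2}\|\bu_h^n-\bu^n\|$, summing over $n$, and dividing by $M$ then produces $C\big((h^{2m}+\|S_r\|_2 h^{2m+2})\frac1M\sum_{n=1}^M\|\bu^n\|_{m+1}^2+\sum_{i=r+1}^d\|\bpsi_i\|_1^2\lambda_i\big)$, as claimed. I expect the bookkeeping of which error source $\|S_r\|_2$ attaches to — routing the ``in-$\bfX^r$'' remainder through the finite element error rather than through $P_r\bu^n$ — to be the only real subtlety; everything else is triangle inequalities, Young's inequality, and the cited identities.
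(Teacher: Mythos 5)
Your proof is correct and follows essentially the same route the paper sketches: the paper's proof is a one-line appeal to ``adding and subtracting the FEM-DNS snapshots'' plus the projection error estimates of the cited reference, and your three-term splitting, the use of \eqref{slam} and \eqref{fb} for the truncation piece, and the inverse estimate $\|\nabla\bw_r\|\le\|S_r\|_2^{1/2}\|\bw_r\|$ to generate the $\|S_r\|_2h^{2m+2}$ term are exactly the details that citation hides. The only caveat is one you inherit from the lemma statement itself, namely that the FEM-DNS error is treated as purely $O(h^{m+1})$ with no temporal contribution, which the paper only acknowledges later in Assumption \eqref{assm1}--\eqref{assm2}.
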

\begin{proof}
This follows from the projection error estimate from \cite{TZ12} along with the triangle inequality, after adding and subtracting the FEM-DNS snapshots to the true solution (at corresponding times).
\end{proof}

{\bf Assumption:} We make the assumption that 
\begin{eqnarray}
\|\bu^n-P_r\bu^n\|^2&\leq& C\Big(h^{2m+2}+\Delta t^2+\sum_{i=r+1}^{d}\lambda_i\Big) \label{assm1} \quad \text{and}
\\
\|\nabla (\bu^n-P_r\bu^n)\|^2&\leq& C\Big(h^{2m}+\|S_r\|_2h^{2m+2}+(1+\|S_r\|_2)\Delta t^2 +\sum_{i=r+1}^{d}\|\bpsi_i\|_1^2\lambda_i\Big).\label{assm2}
\end{eqnarray}
This assumption essentially says that the projection error the true solution is of the same order of magnitude at most of the time steps.  It is possible the assumption may not hold (in which case the $C$ on the right hand side with become $C\Delta t^{-1}$) in diabolical cases where the true NSE solution lives in the ROM space at most of the times $t^n$.  The assumption is common in error analysis for POD type methods  \cite{TZ12, XWWI17}.

%
%
%
%

In the projection-based VMS method, besides the standard finite element spaces representing all resolved scales an additional large resolved scale is needed. 
For VMS-POD setting, the following spaces are used for $R<r$:
\begin{eqnarray}
 \bfX^R&=&span\{\bpsi_1,\bpsi_2,\dots, \bpsi_R\}, \label{l1}
\\
\bfL^R&=& \nabla \bfX^R :=span\{\nabla\bpsi_1,\nabla\bpsi_2,\dots, \nabla \bpsi_R\} \label{l2}.
\end{eqnarray}
Note that from the construction, we have $\bfX^R \subset \bfX^r \subset \bfX^h \subset \bfX$.


The  $L^2$ orthogonal projection $P_R:\bfL^2\rightarrow \bfL^R$ will be needed in the VMS formulation, and is defined by
\begin{eqnarray}\label{ortp}
(\bu-P_R\bu,\bv_R)=0,\quad \forall \bv_R \in \bfL^R.
\end{eqnarray}

\section{Post-Processed VMS-POD Schemes}

This section proposes fully discrete VMS-POD methods for solving (\ref{fe_proper}).  For simplicity, in section 3.1, we analyze the backward Euler temporal discretization.  In section 3.2, extension to BDF2 time stepping is considered.  In our analysis, we assume that the  eddy viscosity coefficient $\nu_T $ is known bounded, positive and element-wise constant. The results of this report can be extended in the case $\nu_T$ is nonconstant even nonlinear. The consideration of a nonlinear $\nu_T$ requires more complex mathematical theory due to the strong monocity, see \cite{SWZ13}. 

In that analysis that follows, we denote variables at time $t^n=n\Delta t, n=0,1,2,\dots,M, T:=M\Delta t$ using superscripts, e.g. ${\bf f}^n:={\bf f}(t^n)$.

\subsection{Backward Euler}

The two step VMS-POD scheme equipped with backward Euler time stepping reads as follows:

\begin{Algorithm} \label{alg1}
Let $\bff \in L^2(0,T;\bfH^{-1}(\Omega))$ and $\bu_r^0=\bw_r^0$  be given with $L^2$ projection of $\bu_0$ in $\bfX^r$. Given $\bu_r^n \in \bfX_r$ compute $\bu_r^{n+1}$ by applying the following two steps:

{\bf Step 1.}
Calculate  $\bw_r^{n+1}\in \bfX^r$ satisfying $\forall {\bpsi\in \bfX^r} $,
\begin{eqnarray} \label{step1}
\qquad\bigg( \frac{\bw_r^{n+1}-\bu_r^{n}}{\Delta t},\bpsi \bigg)+b(\bw_r^{n+1},\bw_r^{n+1},\bpsi)
+ \nu(\nabla \bw_r^{n+1},\nabla \bpsi)=(\bff^{n+1},\bpsi).
\end{eqnarray}

{\bf Step 2.} Post-process $\bw_r^{n+1}$ by applying projection $P_R$ to obtain
$\bu_r^{n+1} \in \bfX_r$, $\forall {\bpsi\in \bfX^r}$:
\begin{eqnarray}\label{step2}
\qquad\bigg (\frac{\bw_r^{n+1}-\bu_r^{n+1}}{\Delta t},\bpsi\bigg ) =(\nu_T(I-P_{R})\nabla \frac{(\bw_r^{n+1}+\bu_r^{n+1})}{2} , (I-P_{R})\nabla \bpsi),
\end{eqnarray}
\end{Algorithm}

We note that Step 1 is the standard Galerkin POD method, and step 2 is completely decoupled VMS stabilization step.  The projection in Step 2 is not a filter but constructed to recover VMS eddy viscosity term as in \cite{LRT11}.  

Note that if we let 
$\bpsi= \frac{(\bw_r^{n+1}+\bu_r^{n+1})}{2}$ in (\ref{step2}), the numerical dissipation induced from Step 2 is immediately seen to be
\begin{eqnarray}
\|{\bw_r^{n+1}\|^2= \|\bu_r^{n+1}}\|^2+2\nu_T\Delta t\norm{(I-P_{R})\nabla \frac{(\bw_r^{n+1}+\bu_r^{n+1})}{2}}^2. \label{numdis}
\end{eqnarray}

\subsubsection{Stability of Algorithm \ref{alg1}}

We now prove stability of Algorithm \ref{alg1}.  

\begin{lemma} \label{Lem:sta} The post-processed-VMS-POD approximation is unconditionally stable in the following sense: for any $\Delta t>0$,
\begin{eqnarray}
\|\bu_r^{M}\|^2+\sum_{n=0}^{M-1}\bigg[2\nu_T\Delta t\norm{\|(I-P_{R})\nabla \frac{(\bw_r^{n+1}+\bu_r^{n+1})}{2}}^2+\|\bw_r^{n+1}-\bu_r^{n}\|^2\nonumber
\\
+\nu\Delta t \|\nabla \bw_r^{n+1}\|^2\bigg]\leq \|\bu_r^{0}\|^2+\nu^{-1}|||\bff|||^2_{2,-1}. \nonumber
\end{eqnarray}

\end{lemma}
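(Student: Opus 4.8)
The plan is to derive the energy estimate by testing both steps of the algorithm with natural choices, summing over time steps, and telescoping.

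\textbf{Step 1 (energy balance for the evolution step).}
First I would set $\bpsi = \bw_r^{n+1}$ in the Step 1 equation \eqref{step1}. The skew-symmetry of $b$ kills the nonlinear term, $b(\bw_r^{n+1},\bw_r^{n+1},\bw_r^{n+1})=0$. For the first term I use the polarization identity
$$
\left(\frac{\bw_r^{n+1}-\bu_r^{n}}{\Delta t},\bw_r^{n+1}\right) = \frac{1}{2\Delta t}\Big(\|\bw_r^{n+1}\|^2 - \|\bu_r^{n}\|^2 + \|\bw_r^{n+1}-\bu_r^{n}\|^2\Big).
$$
The viscous term gives $\nu\|\nabla\bw_r^{n+1}\|^2$, and the forcing term $(\bff^{n+1},\bw_r^{n+1})$ is bounded using the dual norm and Young's inequality:
$|(\bff^{n+1},\bw_r^{n+1})| \le \|\bff^{n+1}\|_{-1}\|\nabla\bw_r^{n+1}\| \le \tfrac{1}{2\nu}\|\bff^{n+1}\|_{-1}^2 + \tfrac{\nu}{2}\|\nabla\bw_r^{n+1}\|^2$, absorbing half the viscous term. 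Multiplying through by $\Delta t$ yields
$$
\tfrac12\|\bw_r^{n+1}\|^2 - \tfrac12\|\bu_r^{n}\|^2 + \tfrac12\|\bw_r^{n+1}-\bu_r^{n}\|^2 + \tfrac{\nu\Delta t}{2}\|\nabla\bw_r^{n+1}\|^2 \le \tfrac{\Delta t}{2\nu}\|\bff^{n+1}\|_{-1}^2.
$$

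\textbf{Step 2 (the post-processing dissipation identity).}
For Step 2, I simply invoke the identity \eqref{numdis} already recorded in the excerpt, obtained by choosing $\bpsi = \tfrac12(\bw_r^{n+1}+\bu_r^{n+1})$ in \eqref{step2}:
$$
\|\bw_r^{n+1}\|^2 = \|\bu_r^{n+1}\|^2 + 2\nu_T\Delta t\,\Big\|(I-P_R)\nabla\tfrac{\bw_r^{n+1}+\bu_r^{n+1}}{2}\Big\|^2.
$$
Substituting this for $\|\bw_r^{n+1}\|^2$ in the Step 1 estimate converts the bound into one relating $\|\bu_r^{n+1}\|^2$ to $\|\bu_r^n\|^2$, with the VMS dissipation term appearing with the correct sign on the left.

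\textbf{Step 3 (telescoping and final assembly).}
Then I would sum over $n = 0, 1, \dots, M-1$. The $\pm\tfrac12\|\bu_r^{n}\|^2$ terms telescope, leaving $\tfrac12\|\bu_r^{M}\|^2 - \tfrac12\|\bu_r^{0}\|^2$ plus the accumulated dissipation terms on the left and $\tfrac{1}{2\nu}\sum_{n=0}^{M-1}\Delta t\|\bff^{n+1}\|_{-1}^2 \le \tfrac{1}{2\nu}|||\bff|||_{2,-1}^2$ on the right. Multiplying by $2$ gives exactly the claimed inequality. I do not expect a real obstacle here — the proof is a standard energy argument; the only thing requiring care is correctly bookkeeping the factor $\tfrac12$ across both steps and the consistent use of the already-established identity \eqref{numdis}, so that the VMS dissipation term lands on the left-hand side with coefficient $2\nu_T\Delta t$ rather than $\nu_T\Delta t$.
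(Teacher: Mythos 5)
Your proof is correct and follows essentially the same route as the paper: test Step 1 with $\bw_r^{n+1}$, use skew-symmetry and the polarization identity, substitute the dissipation identity \eqref{numdis} from Step 2, bound the forcing via the dual norm and Young's inequality, and telescope. The only (immaterial) difference is that the paper substitutes \eqref{numdis} before bounding the forcing term, whereas you do it afterwards; the bookkeeping of the factor $2$ works out identically.
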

\begin{proof}
Letting $\bpsi=\bw^{n+1}_{r}$ in (\ref{step1}) and using the polarization identity  yields
\begin{eqnarray}
\frac{1}{2 \Delta t}\|\bw_r^{n+1}\|^2-\frac{1}{2 \Delta t}\|\bu_r^{n}\|^2+\frac{1}{2 \Delta t}\|\bw_r^{n+1}-\bu_r^{n}\|^2
+\nu \|\nabla \bw_r^{n+1}\|^2=(\bff^{n+1},\bw_r^{n+1}). \label{stab1}
\end{eqnarray}
Substitute (\ref{numdis}) in (\ref{stab1}) and multiply both sides by $2\Delta t$, which provides
\begin{eqnarray}
\|\bu_r^{n+1}\|^2-\|\bu_r^{n}\|^2+2\nu_T\Delta t\bigg\|(I-P_{R})\nabla \frac{(\bw_r^{n+1}+\bu_r^{n+1})}{2}\bigg\|^2+\|\bw_r^{n+1}-\bu_r^{n}\|^2 \nonumber
\\
+2\nu\Delta t \|\nabla \bw_r^{n+1}\|^2=2\Delta t(\bff^{n+1},\bw_r^{n+1}). \label{stab2}
\end{eqnarray}
Bounding the forcing term in the usual way, and then summing over the time steps gives the stated result.
\end{proof}
The result of Lemma \ref{Lem:sta} also establishes the stability of $\bw_r^{M}$.
\begin{corollary} \label{staw} (Stability of  $\bw_r^M$)
\begin{gather}
\|\bw_r^{M}\|^2+2\nu_T\Delta t\sum_{n=0}^{M-2} \bigg\|(I-P_{R})\nabla \frac{(\bw_r^{n+1}+\bu_r^{n+1})}{2}\bigg\|^2+\sum_{n=0}^{M-1}
\bigg[\|\bw_r^{n+1}-\bu_r^{n}\|^2\nonumber
\\
+\nu\Delta t \|\nabla \bw_r^{n+1}\|^2\bigg]\leq \|\bu_r^{0}\|^2+\nu^{-1}|||\bff|||^2_{2,-1} 
\end{gather}
\end{corollary}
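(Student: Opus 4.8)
The plan is to derive Corollary \ref{staw} directly from the energy identity established in the proof of Lemma \ref{Lem:sta}, rather than re-running an energy argument from scratch. The key observation is that equation \eqref{stab2} in the proof of Lemma \ref{Lem:sta} already contains a term $\|\bu_r^{n+1}\|^2 - \|\bu_r^n\|^2$ that telescopes, but the numerical dissipation identity \eqref{numdis} also gives us $\|\bw_r^{n+1}\|^2 = \|\bu_r^{n+1}\|^2 + 2\nu_T\Delta t\|(I-P_R)\nabla\frac{(\bw_r^{n+1}+\bu_r^{n+1})}{2}\|^2$. So at the final step $n=M-1$, I would replace $\|\bu_r^M\|^2$ by $\|\bw_r^M\|^2 - 2\nu_T\Delta t\|(I-P_R)\nabla\frac{(\bw_r^M+\bu_r^M)}{2}\|^2$.

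Concretely, I would first take equation \eqref{stab2} and sum over $n=0$ to $n=M-1$, exactly as in the proof of Lemma \ref{Lem:sta}, obtaining
\begin{eqnarray*}
\|\bu_r^{M}\|^2 + \sum_{n=0}^{M-1}\bigg[2\nu_T\Delta t\bigg\|(I-P_{R})\nabla \frac{(\bw_r^{n+1}+\bu_r^{n+1})}{2}\bigg\|^2 + \|\bw_r^{n+1}-\bu_r^{n}\|^2 + 2\nu\Delta t \|\nabla \bw_r^{n+1}\|^2\bigg] \\
= \|\bu_r^{0}\|^2 + 2\Delta t\sum_{n=0}^{M-1}(\bff^{n+1},\bw_r^{n+1}).
\end{eqnarray*}
Then I would isolate the $n=M-1$ term of the first sum: $2\nu_T\Delta t\|(I-P_R)\nabla\frac{(\bw_r^M+\bu_r^M)}{2}\|^2$, and use \eqref{numdis} at level $n=M-1$ to combine it with $\|\bu_r^M\|^2$, producing $\|\bw_r^M\|^2$ on the left-hand side while the remaining dissipation terms now run only from $n=0$ to $n=M-2$. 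Finally I would bound the forcing term in the standard way, $2\Delta t(\bff^{n+1},\bw_r^{n+1}) \leq 2\Delta t\|\bff^{n+1}\|_{-1}\|\nabla\bw_r^{n+1}\| \leq \nu^{-1}\Delta t\|\bff^{n+1}\|_{-1}^2 + \nu\Delta t\|\nabla\bw_r^{n+1}\|^2$, absorbing the gradient term into the viscous dissipation (which leaves exactly one factor $\nu\Delta t\|\nabla\bw_r^{n+1}\|^2$ on the left), and recognize $\Delta t\sum_{n=0}^{M-1}\|\bff^{n+1}\|_{-1}^2 \leq |||\bff|||_{2,-1}^2$.

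The only mild subtlety — and it is the place to be careful rather than a genuine obstacle — is bookkeeping the index range on the $(I-P_R)\nabla$ dissipation sum when peeling off the last term, and making sure that after absorbing half the forcing bound we are left with $\nu\Delta t\|\nabla\bw_r^{n+1}\|^2$ (not $2\nu\Delta t\|\nabla\bw_r^{n+1}\|^2$) summed over all $n$, as the statement asserts. Since both \eqref{stab2} and \eqref{numdis} are exact identities, there is no inequality to lose control of until the final forcing estimate, so the argument is essentially a rearrangement of what was already proved. I do not anticipate any real difficulty; the corollary is a genuine corollary.
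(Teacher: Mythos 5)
Your proposal is correct and is essentially the paper's own argument: the paper proves the corollary by expanding the $n=M-1$ term of the dissipation sum in Lemma \ref{Lem:sta} and invoking the identity \eqref{numdis} to convert $\|\bu_r^M\|^2$ plus that term into $\|\bw_r^M\|^2$, exactly as you do. Starting from \eqref{stab2} and re-doing the summation and forcing bound is an immaterial difference, and your index bookkeeping and the absorption of half the forcing estimate into $2\nu\Delta t\|\nabla\bw_r^{n+1}\|^2$ are handled correctly.
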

\begin{proof}
Expand the summation in Lemma \ref{Lem:sta} for $n=M-1$ and use (\ref{numdis}).
\end{proof}

\subsubsection{A Priori Error Estimation } In this section, we present the error analysis of the true solution of Navier Stokes equations and POD approximation (\ref{step1})-(\ref{step2}).
The optimal asymptotic error estimation requires the following regularity assumptions for the true solution:
%
\begin{eqnarray}
&\bu\in L^{\infty}(0,T;H^{m+1}(\Omega))  
\quad p\in L^{\infty}(0,T;H^{m}(\Omega))\quad \bu_{tt} \in L^2(0,T;H^1(\Omega))\quad \nonumber\\
&\bff \in L^2(0,T;H^{-1}(\Omega)) \label{regularity}
\end{eqnarray} 
\begin{theorem}\label{thm} Suppose (\ref{regularity}) holds and $\bu_r^n$ and $\bw_r^n$ given by Algorithm (\ref{step1})-(\ref{step2}). For sufficiently small $\Delta t$, i.e. $\Delta t \leq [C\nu^{-3}\|\nabla \bu\|^4_{\infty,0}]^{-1}$ we have the following estimation:
\begin{eqnarray}\label{conv:be}
\lefteqn{\|\bu^M-\bu_r^{M}\|^2+ \sum_{n=0}^{M-1}\bigg[\frac{1}{4}\Delta t\nu_T\|(I-P_{R})\nabla (\bu^{n+1}-(\bu_r^{n+1}+\bw_r^{n+1})/2)\|^2}\nonumber
\\ 
&&+ \nu\Delta t \|\nabla (\bu^{n+1}-\bw_r^{n+1})\|^2\bigg] \leq
 C\bigg[h^{2m+2}|||\bu|||^2_{2,m+1}+\sum_{j=r+1}^{d}\lambda_j \nonumber
 \\&&+
\nu \bigg(\Big(h^{2m}+\|S_r\|_2h^{2m+2}\Big)|||\bu|||^2_{2,m+1}+\sum_{j=r+1}^{d}\|\bpsi_j\|_1^2\lambda_j\bigg)\nonumber
\\
&&+ {\nu}^{-1}|||\nabla \bu|||_{2,0}^2\bigg(\Big(h^{2m}+\|S_r\|_2h^{2m+2}\Big)|||\bu|||^2_{2,m+1}+\sum_{j=r+1}^{d}\|\bpsi_j\|_1^2\lambda_j\bigg)\nonumber
\\
&&+\nu_T \bigg(\Big(h^{2m}+(\|S_R\|_2+\|S_r\|_2)h^{2m+2}\Big)|||\bu|||^2_{2,m+1}\nonumber
\\&&+\sum_{j=R+1}^{d}\|\bpsi_j\|_1^2\lambda_j+\sum_{j=r+1}^{d}\|\bpsi_j\|_1^2\lambda_j\bigg)+ {\nu}^{-2} (\|\bu_r^0\|^2+\nu^{-1}|||\bff|||^2_{2,-1})  \nonumber
\\
&&\times\bigg(\Big(h^{2m}+\|S_r\|_2h^{2m+2}\Big)|||\bu|||^2_{2,m+1}+\sum_{j=r+1}^{d}\|\bpsi_j\|_1^2\lambda_j\bigg)
\nonumber
\\
&&+{\nu}^{-1} h^{2m}|||p|||_{2,m}^2+\nu^{-1}(\Delta t)^2 \|\bu_{tt}\|_{L^2(0,T;H^1(\Omega))}^2\bigg]\nonumber
\end{eqnarray}
where $C$ is independent from $\Delta t, h, \nu$ and $\nu_T$.
\end{theorem}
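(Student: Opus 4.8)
The plan is to follow the standard energy-argument template for POD error estimates, adapted to the two-step structure. First I would decompose the error at each time level. Writing $\bu^{n}-\bu_r^{n} = (\bu^{n}-P_r\bu^{n}) + (P_r\bu^{n}-\bu_r^{n}) =: \boldsymbol{\eta}^{n} + \bphi_r^{n}$, and similarly $\bu^{n}-\bw_r^{n} = \boldsymbol{\eta}^{n} + (P_r\bu^{n}-\bw_r^{n}) =: \boldsymbol{\eta}^{n} + \bpsi_r^{n}$, where $P_r$ is the $L^2$ projection from \eqref{l2pro}. The $\boldsymbol{\eta}$ terms are controlled directly by Lemma \ref{errpro} (and the subsequent Assumption \eqref{assm1}--\eqref{assm2}), so the real work is to derive a discrete Gronwall inequality for the POD-remainder terms $\bphi_r^{n}$ and $\bpsi_r^{n}$.

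Next I would write the consistency equation. The true solution satisfies \eqref{weak_nse_full}; evaluating it at $t^{n+1}$, subtracting the Step 1 equation \eqref{step1} tested against $\bpsi\in\bfX^r$, and adding and subtracting the backward-difference quotient of $\bu$ gives an equation for $\bpsi_r^{n+1}$ whose right-hand side contains: the time-discretization truncation error $\bu_t^{n+1} - (\bu^{n+1}-\bu^{n})/\Delta t$ (bounded in the usual way by $\Delta t\,\|\bu_{tt}\|_{L^2(t^n,t^{n+1};H^1)}$ after a Taylor expansion), the projection-error terms coming from $\boldsymbol{\eta}$ (including a $(\boldsymbol{\eta}_t,\bpsi)$-type term handled by $\|\boldsymbol{\eta}^{n+1}-\boldsymbol{\eta}^n\|$ or by the $L^2$-orthogonality of $P_r$), the nonlinear terms $b(\bu^{n+1},\bu^{n+1},\bpsi)-b(\bw_r^{n+1},\bw_r^{n+1},\bpsi)$, and the pressure term $(p^{n+1}-q_h,\nabla\cdot\bpsi)$ for any $q_h\in Q^h$. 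I would choose $\bpsi = \bpsi_r^{n+1}$. The nonlinear term is split as $b(\boldsymbol{\eta}^{n+1}+\bpsi_r^{n+1},\bu^{n+1},\bpsi_r^{n+1}) + b(\bw_r^{n+1},\boldsymbol{\eta}^{n+1}+\bpsi_r^{n+1},\bpsi_r^{n+1})$, using skew-symmetry to kill $b(\bw_r^{n+1},\bpsi_r^{n+1},\bpsi_r^{n+1})=0$, and the remaining pieces are bounded with Lemma \ref{l:ssb}, Young's inequality, and the stability bound on $\|\nabla\bw_r^{n+1}\|$ from Corollary \ref{staw}; the gradient POD inverse inequality \eqref{gradsr} is used to absorb $\|\nabla\boldsymbol{\eta}^{n+1}\|$-type quantities where a full $H^1$ projection estimate is unavailable. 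This yields a bound of the form $\frac{1}{2\Delta t}(\|\bpsi_r^{n+1}\|^2-\|\bpsi_r^{n}\|^2) + \frac{\nu}{2}\|\nabla\bpsi_r^{n+1}\|^2 \le C\,\nu^{-1}\|\nabla\bu^{n+1}\|^2\|\bpsi_r^{n}\|^2 + (\text{data/projection terms})$, after also observing $\|\bpsi_r^n\|\le\|\bpsi_r^n - \bphi_r^n\| + \|\bphi_r^n\|$ and relating $\bphi_r^{n}$ to $\bpsi_r^{n}$ via Step 2.

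The coupling between the two steps is handled as follows. From Step 2 \eqref{step2}, tested appropriately, one gets $\|\bphi_r^{n+1}\|^2 = \|\bpsi_r^{n+1}\|^2 - 2\nu_T\Delta t\,\|(I-P_R)\nabla\tfrac{\bw_r^{n+1}+\bu_r^{n+1}}{2}\|^2 + (\text{cross terms with }\boldsymbol{\eta})$, or more precisely I would add and subtract $P_R\nabla\bu^{n+1}$ inside the VMS dissipation term to produce the consistency error $\nu_T\|(I-P_R)\nabla\bu^{n+1}\|^2$ on the right (this is where the $\nu_T$-block with $\|S_R\|_2$ and the tail $\sum_{j=R+1}^{d}\|\bpsi_j\|_1^2\lambda_j$ in the theorem comes from, using Lemma \ref{errpro} with $R$ in place of $r$). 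Substituting this identity into the Step 1 inequality telescopes the left-hand side into $\|\bphi_r^{n+1}\|^2$ while leaving the VMS dissipation as a nonnegative term matched to the $\tfrac14\Delta t\nu_T\|(I-P_R)\nabla(\cdots)\|^2$ term in the statement. Summing over $n$ from $0$ to $M-1$, using $\bphi_r^0=0$ (since $\bu_r^0=\bw_r^0$ is the $L^2$ projection of $\bu_0$), and applying the discrete Gronwall lemma — which is legitimate precisely under the stated time-step restriction $\Delta t\le[C\nu^{-3}\|\nabla\bu\|_{\infty,0}^4]^{-1}$ so that the Gronwall constant stays bounded — gives the bound on $|||\bphi_r|||$ and the dissipation sums. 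Finally, triangle inequality with the $\boldsymbol{\eta}$ estimates of Lemma \ref{errpro}/the Assumption produces the stated right-hand side.

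I expect the main obstacle to be the careful bookkeeping of the nonlinear term together with the two-step coupling: one must route every factor of $\nu^{-1}$, $\nu_T$, $\|S_r\|_2$, $\|S_R\|_2$ and each eigenvalue tail to exactly the block where it appears in \eqref{conv:be}, and in particular the term $\nu^{-2}(\|\bu_r^0\|^2+\nu^{-1}|||\bff|||_{2,-1}^2)(\cdots)$ signals that somewhere a nonlinear bound is closed using the \emph{a priori} stability bound for $\bw_r$ (Corollary \ref{staw}) rather than Gronwall-absorbing it — choosing which nonlinear piece to treat that way, versus which to leave for Gronwall, is the delicate decision. Everything else (Taylor expansion of the time-truncation error, Young's inequality splits, the pressure term via \eqref{ap2} and \eqref{infsup}) is routine.
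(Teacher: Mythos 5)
Your proposal is correct and follows essentially the same route as the paper: decompose the error through the $L^2$ projection into $\bfX^r$, test the Step-1 consistency equation with the $\bw_r$-error, split the nonlinearity using skew-symmetry and close one piece with the a priori bound of Corollary \ref{staw} (which is exactly where the $\nu^{-2}(\|\bu_r^0\|^2+\nu^{-1}|||\bff|||^2_{2,-1})$ block arises), convert the $\bw_r$-error norm into the $\bu_r$-error norm plus the VMS dissipation via the Step-2 identity, and finish with discrete Gronwall under the stated time-step restriction and a triangle inequality. The only cosmetic differences are the naming of the remainder terms and your optional appeal to the inverse inequality \eqref{gradsr}, which the paper avoids by using the $H^1$ projection estimate of Lemma \ref{errpro} directly.
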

\begin{remark}
Under the assumptions of Theorem \ref{thm} and the finite element spaces $(\bfX^h,Q^h)$ with piecewise polynomials of degree $m$ and $m-1$, respectively. We obtain the following asymptotic error estimation:
\begin{eqnarray}\label{conv:remark}
\lefteqn{\|\bu^M-\bu_r^{M}\|^2+ \sum_{n=0}^{M-1}\bigg[\frac{1}{4}\Delta t\nu_T\|(I-P_{R})\nabla (\bu^{n+1}-(\bu_r^{n+1}+\bw_r^{n+1})/2)\|^2}\nonumber
\\ 
&&+ \nu\Delta t \|\nabla (\bu^{n+1}-\bw_r^{n+1})\|^2\bigg] \leq
C\Big( h^{2m}+(\Delta t)^2+(1+\|S_R\|_2+\|S_r\|_2)h^{2m+2}\nonumber
\\&&+\sum_{j=R+1}^{d}\|\bpsi_j\|_1^2\lambda_j+\sum_{j=r+1}^{d}(1+\|\bpsi_j\|_1^2)\lambda_j \Big)\nonumber
\end{eqnarray}
\end{remark}


\begin{proof}We begin the proof by deriving error equations. From (\ref{weak_nse_full}), we have that true solution $(\bu,p)$ at time level $t=t^{n+1}$
satisfies, for $\bpsi_r\in X_r$,
\begin{gather}
\bigg( \frac{\bu^{n+1}-\bu^{n}}{\Delta t},\bpsi_r \bigg)+\nu(\nabla \bu^{n+1},\nabla \bpsi_r)+b(\bu^{n+1},\bu^{n+1},\bpsi_r) \nonumber
\\
-(p^{n+1},\nabla\cdot\bpsi_r)+E(\bu,\bpsi_r)=(\bff(t^{n+1}),\bpsi_r), \label{err1}
\end{gather}
where
\begin{equation*}
E(\bu,\bpsi)= \bigg( \bu_t^{n+1}-\frac{\bu^{n+1}-\bu^{n}}{\Delta t},\bpsi \bigg).
\end{equation*}
We define the following notations:
\begin{gather}
\bfeta^n:=\bu^n-{\bf \mathcal{U}}^n,\quad \bphi_r^n:=\bw_r^n-{\bf\mathcal{U}}^n,\quad \btheta_r^n:=
\bu_r^n-{\bf\mathcal{U}}^n, \quad
\be_r^n=\bu^n-\bu_r^n,\quad \beps_r^n:=\bu^n-\bw_r^n, \nonumber
\end{gather}
where ${\bf \mathcal{U}^n}$ is $L^2$ projection of $\bu^n$ in $\bfX_r$. 

Subtracting (\ref{step1}) from  (\ref{err1}) yields
\begin{eqnarray}
\bigg( \frac{\beps_r^{n+1}-\be_r^{n}}{\Delta t},\bpsi_r \bigg)+\nu(\nabla \beps_r^{n+1},\nabla \bpsi_r)+b(\bu^{n+1},\bu^{n+1},\bpsi_r)\nonumber
\\
-b(\bw_r^{n+1},\bw_r^{n+1},\bpsi_r)-(p^{n+1},\nabla\cdot\bpsi_r)+E(\bu,\bpsi_r)=0.
\end{eqnarray}
Substitute $\beps_r^n=\bfeta^n-\bphi_r^n$ and $\be_r^n=\bfeta^n-\btheta_r^n$ with $\bpsi=\bphi_r^{n+1}$ in the last equation we obtain
\begin{gather}
\bigg( \frac{\bphi_r^{n+1}-\btheta_r^{n}}{\Delta t},\bphi_r^{n+1}\bigg)+\nu\|\nabla \bphi_r^{n+1}\|^2=\bigg( \frac{\bfeta^{n+1}-\bfeta^{n}}{\Delta t},\bphi_r^{n+1}\bigg) \nonumber
+\nu(\nabla \bfeta^{n+1},\nabla \bphi_r^{n+1}) \nonumber
\\
+[b(\bu^{n+1},\bu^{n+1},\bphi_r^{n+1})-b(\bw_r^{n+1},\bw_r^{n+1},\bphi_r^{n+1})]
-(p^{n+1},\nabla \cdot \bphi_r^{n+1})+E(\bu,\bphi_r^{n+1}). \nonumber
\end{gather}
From the definition of $L^2$ projection (\ref{l2pro}), we note that $(\bfeta^n,\bphi_r^{n+1})=0$ and $(\bfeta^{n+1},\bphi_r^{n+1})=0$.  Using this along with the polarization identity and that $\bphi_r^{n+1} \in \bfX^r \subset \bfV^h$, we obtain the bound
\begin{eqnarray}
\lefteqn{\frac{1}{2\Delta t}(\|\bphi_r^{n+1}\|^2-\|\btheta_r^{n}\|^2)+\nu\|\nabla\bphi_r^{n+1}\|^2 \leq 
|\nu(\nabla \bfeta^{n+1},\nabla\bphi_r^{n+1})|} \nonumber\\
&&+|b(\bu^{n+1},\bu^{n+1},\bphi_r^{n+1})-b(\bw_r^{n+1},\bw_r^{n+1},\bphi_r^{n+1})| \nonumber
\\
&&+|(p^{n+1}-q_h,\nabla \cdot \bphi_r^{n+1})|+|E(\bu,\bphi_r^{n+1})| \label{boundterm}
\end{eqnarray}
The first term on the right hand side of (\ref{boundterm}), and the pressure term, can be bounded using Cauchy-Schwarz and Young's inequalities,
\begin{eqnarray}
|\nu(\nabla \bfeta^{n+1},\nabla \bphi_r^{n+1})| &\leq & C\nu \|\nabla \bfeta^{n+1}\|^2 +\frac{\nu}{12}\|\nabla\bphi_r^{n+1}\|^2 \label{b2} \\
|(p^{n+1}-q_h,\nabla \cdot \bphi_r^{n+1})| &\leq&  \frac{C}{\nu}\|p^{n+1}-q_h\|^2+\frac{\nu}{12}\|\nabla\bphi_r^{n+1}\|^2.
\end{eqnarray}
For the nonlinear terms, first add and subtract terms to get
\begin{eqnarray}
\lefteqn{b(\bu^{n+1},\bu^{n+1},\bphi_r^{n+1})-b(\bw_r^{n+1},\bw_r^{n+1},\bphi_r^{n+1})} \nonumber
\\&=& b(\bu^{n+1},\bu^{n+1},\bphi_r^{n+1}) 
-b(\bw_r^{n+1},\bu^{n+1},\bphi_r^{n+1})+b(\bw_r^{n+1},\bu^{n+1},\bphi_r^{n+1}) -b(\bw_r^{n+1},\bw_r^{n+1},\bphi_r^{n+1}) \nonumber
\\
&=&b(\beps_r^{n+1},\bu^{n+1}, \bphi_r^{n+1})+b(\bw_r^{n+1},\beps_r^{n+1}, \bphi_r^{n+1}) \nonumber
\\
&=&b(\bfeta^{n+1},\bu^{n+1}, \bphi_r^{n+1})-b(\bphi_r^{n+1},\bu^{n+1}, \bphi_r^{n+1}) +b(\bw_r^{n+1},\bfeta^{n+1}, \bphi_r^{n+1}). \label{nonline}
\end{eqnarray}
Now using Lemma \ref{l:ssb},  Young's and Poincar\'e's inequalities, the terms in (\ref{nonline}) are estimated as follows:
\begin{eqnarray*}
|b(\bfeta^{n+1},\bu^{n+1}, \bphi_r^{n+1})|&\leq&C\sqrt{\|\bfeta^{n+1}\|\|\nabla \bfeta^{n+1}\|}\|\nabla\bu^{n+1}\|\|\nabla \bphi_r^{n+1}\|
\\
&\leq& \frac{C}{\nu}\|\bfeta^{n+1}\|\|\nabla \bfeta^{n+1}\|\|\nabla \bu^{n+1}\|^2+\frac{\nu}{12}\|\nabla\bphi_r^{n+1}\|^2,
\\
|b(\bphi_r^{n+1},\bu^{n+1}, \bphi_r^{n+1})|&\leq& C\sqrt{\|\bphi_r^{n+1}\|\|\nabla \bphi_r^{n+1}\|}\|\nabla\bu_r^{n+1}\|\|\nabla \bphi_r^{n+1}\|
\\
&\leq& \frac{C}{\nu^3}\|\bphi_r^{n+1}\|^2\|\nabla\bu^{n+1}\|^4+\frac{\nu}{12}\|\nabla\bphi_r^{n+1}\|^2,
\\
|b(\bw_r^{n+1},\bfeta^{n+1}, \bphi_r^{n+1})|&\leq&C\sqrt{\|\bw_r^{n+1}\|\|\nabla \bw_r^{n+1}\|}\|\nabla \bfeta^{n+1}\|\|\nabla \bphi_r^{n+1}\|
\\
&\leq& \frac{C}{\nu}\|\bw_r^{n+1}\|\|\nabla \bw_r^{n+1}\|\|\nabla\bfeta^{n+1}\|^2+\frac{\nu}{12}\|\nabla\bphi_r^{n+1}\|^2.
\end{eqnarray*}
The consistency error in (\ref{boundterm}) is estimated by
\begin{equation}
|E(\bu,\bphi_r^{n+1})|\leq \frac{C}{\nu}\| \bu_t^{n+1}-\frac{\bu^{n+1}-\bu^{n}}{\Delta t} \|^2+\frac{\nu}{12}\|\nabla\bphi_r^{n+1}\|^2
\end{equation}
Collecting all bounds for the right hand side terms of (\ref{boundterm}) and multiplying both sides by $2 \Delta t$ gives
\begin{eqnarray}
&& \hspace{-.5in} {(\|\bphi_r^{n+1}\|^2-\|\btheta_r^{n}\|^2)+\nu\Delta t \|\nabla\bphi_r^{n+1}\|^2 
\leq  C\nu\Delta t \|\nabla \bfeta^{n+1}\|^2}
+ \frac{C\Delta t}{\nu}\|\bfeta^{n+1}\|\|\nabla \bfeta^{n+1}\|\|\nabla \bu^{n+1}\|^2\nonumber \\
&& + \frac{C\Delta t}{\nu^3}\|\bphi_r^{n+1}\|^2\|\nabla\bu^{n+1}\|^4
+\frac{C\Delta t}{\nu}\|\bw_r^{n+1}\|\|\nabla \bw_r^{n+1}\|\|\nabla\bfeta^{n+1}\|^2 
 +\frac{C\Delta t}{\nu}\|p^{n+1}-q_h\|^2 \nonumber
\\
&&+\frac{C\Delta t}{\nu}\| \bu_t^{n+1}-\frac{\bu^{n+1}-\bu^{n}}{\Delta t} \|^2.
\label{boundedterm}
\end{eqnarray}

We next get a bound for $\|\bphi_r^{n+1}\|^2$.  Write (\ref{step2}) by adding and subtracting the true solution projection ${\bf\mathcal{U}}^{n+1}$ on both sides to get
\begin{equation}
\bigg (\frac{\bphi_r^{n+1}-\btheta_r^{n+1}}{\Delta t},\bpsi\bigg ) =(\nu_T(I-P_{R})\nabla \frac{(\bphi_r^{n+1}+\btheta_r^{n+1}+2{\bf\mathcal{U}}^{n+1})}{2},(I-P_{R})\nabla \bpsi), \label{nw10}
\end{equation}
and then choose $\bpsi=\frac{(\bphi_r^{n+1}+\btheta_r^{n+1})}{2}$ in (\ref{nw10}) to obtain 
\begin{eqnarray}
\|\bphi_r^{n+1}\|^2&=&\|\btheta_r^{n+1}\|^2+\frac{1}{2}\Delta t\nu_T\|(I-P_{R})\nabla (\bphi_r^{n+1}+\btheta_r^{n+1})\|^2 \nonumber
\\
&&+\Delta t(\nu_T(I-P_{R})\nabla {\bf\mathcal{U}}^{n+1},(I-P_{R})\nabla (\bphi_r^{n+1}+\btheta_r^{n+1})). \label{rela}
\end{eqnarray}
Noting ${\bf\mathcal{U}}^{n+1}=\bu^{n+1}-\bfeta^{n+1}$ and inserting (\ref{rela}) into (\ref{boundedterm}) results into
\begin{eqnarray}
\lefteqn{\|\btheta_r^{n+1}\|^2-\|\btheta_r^{n}\|^2+\frac{1}{2}\Delta t\nu_T\|(I-P_{R})\nabla (\bphi_r^{n+1}+\btheta_r^{n+1})\|^2
+ \nu\Delta t \|\nabla\bphi_r^{n+1}\|^2 }\nonumber
\\ 
&\leq& 
C\nu\Delta t \|\nabla \bfeta^{n+1}\|^2 + \frac{C\Delta t}{\nu}\|\bfeta^{n+1}\|\|\nabla \bfeta^{n+1}\|\|\nabla \bu^{n+1}\|^2\nonumber
\\
&&+  \frac{C\Delta t}{\nu^3}\|\nabla\bu^{n+1}\|^4\Big[\|\btheta_r^{n+1}\|^2+\frac{1}{2}\Delta t\nu_T\|(I-P_{R})\nabla (\bphi_r^{n+1}+\btheta_r^{n+1})\|^2 \nonumber
\\
&&+\Delta t(\nu_T(I-P_{R})\nabla (\bu^{n+1}-\bfeta^{n+1}), (I-P_{R})\nabla (\bphi_r^{n+1}+\btheta_r^{n+1})) \Big] \nonumber
\\
&&+\Delta t(\nu_T (I-P_{R})\nabla(\bfeta^{n+1}-\bu^{n+1}), (I-P_{R})\nabla (\bphi_r^{n+1}+\btheta_r^{n+1})) \nonumber
\\
&&+\frac{C\Delta t}{\nu}\|\bw_r^{n+1}\|\|\nabla \bw_r^{n+1}\|\|\nabla\bfeta^{n+1}\|^2
+\frac{C\Delta t}{\nu}\|p^{n+1}-q_h\|^2\nonumber
\\&&+\frac{C\Delta t}{\nu}\| \bu_t^{n+1}-\frac{\bu^{n+1}-\bu^{n}}{\Delta t} \|^2 \label{s1}
\end{eqnarray}
Assume now $\Delta t \leq \dfrac{1}{8C}\Big[\frac{\|\nabla \bu\|^4}{\nu^3}\Big]^{-1}$. Then, we can bound the remaining right hand terms of (\ref{s1}) as follows :
\begin{eqnarray}
{\frac{C(\Delta t)^2}{\nu^3}\nu_T \|\nabla\bu^{n+1}\|^4 \|(I-P_{R})\nabla (\bphi_r^{n+1}+\btheta_r^{n+1})\|^2 }\nonumber
\leq 
\frac{1}{8}\Delta t \nu_T\|(I-P_{R})\nabla (\bphi_r^{n+1}+\btheta_r^{n+1})\|^2.  \label{b11}
\end{eqnarray}
Similarly, using the preceding bound and Young's inequality we get
\begin{eqnarray}
\lefteqn{\frac{C(\Delta t)^2}{\nu^3}\|\nabla\bu^{n+1}\|^4 |(\nu_T(I-P_{R})\nabla (\bu^{n+1}-\bfeta^{n+1}), (I-P_{R}) \nabla (\bphi_r^{n+1}+\btheta_r^{n+1}))|}  \nonumber
\\
&\leq& C \Delta t |(\nu_T(I-P_{R})\nabla (\bu^{n+1}-\bfeta^{n+1}),(I-P_{R})\nabla (\bphi_r^{n+1}+\btheta_r^{n+1}))|   \nonumber
\\
&\leq& C \Delta t\nu_T \|(I-P_{R})\nabla (\bu^{n+1}-\bfeta^{n+1})\|^2 +  \frac{1}{8}\Delta t \nu_T\|(I-P_{R})\nabla (\bphi_r^{n+1}+\btheta_r^{n+1})\|^2 ,\label{b12}
\end{eqnarray}
and
\begin{eqnarray}
\lefteqn{\Delta t(\nu_T (I-P_{R})\nabla(\bfeta^{n+1}-\bu^{n+1}), (I-P_{R})\nabla (\bphi_r^{n+1}+\btheta_r^{n+1})) } \nonumber
\\
&\leq& C \Delta t\nu_T \|(I-P_{R})\nabla (\bfeta^{n+1}-\bu^{n+1})\|^2 + \frac{1}{8}\Delta t \nu_T\|(I-P_{R})\nabla (\bphi_r^{n+1}+\btheta_r^{n+1})\|^2. \label{b13}
\end{eqnarray}
Substitute the bounds (\ref{b11})-(\ref{b13}) into (\ref{s1}) and 
sum from $n=0$ to $M-1$.  This gives
\begin{eqnarray}
\lefteqn{\|\btheta_r^{M}\|^2+ \sum_{n=0}^{M-1}\bigg[\frac{1}{8}\Delta t\nu_T\|(I-P_{R})\nabla (\bphi_r^{n+1}+\btheta_r^{n+1})\|^2+ \nu\Delta t \|\nabla\bphi_r^{n+1}\|^2\bigg]} \nonumber
\\
&\leq &\|\btheta_r^{0}\|^2 +C \sum_{n=0}^{M-1}\bigg[\nu\Delta t \|\nabla \bfeta^{n+1}\|^2 
+ \frac{\Delta t}{\nu}\|\nabla \bu^{n+1}\|^2\|\nabla\bfeta^{n+1}\|^2 \nonumber
\\&&
+ \Delta t\nu_T \|(I-P_{R})\nabla (\bu^{n+1}-\bfeta^{n+1})\|^2+
\frac{\Delta t}{\nu}\|\bw_r^{n+1}\|\|\nabla \bw_r^{n+1}\|\|\nabla\bfeta^{n+1}\|^2\nonumber
\\
&&+\frac{\Delta t}{\nu} \|p^{n+1}-q_h\|^2 +\frac{\Delta t}{\nu}\| \bu_t^{n+1}-\frac{\bu^{n+1}-\bu^{n}}{\Delta t} \|^2
\bigg] + \frac{C\Delta t}{\nu^3}  \sum_{n=0}^{M-1} \|\nabla\bu^{n+1}\|^4\|\btheta_r^{n+1}\|^2 \label{ss10}
\end{eqnarray}
Note that since $\bu_r^0={\bf \mathcal{U}}^0$, the first right hand side term in (\ref{ss10}) vanishes.  The second right hand side term is majorized using Lemma \ref{errpro}, as
\begin{eqnarray}
{\nu\Delta t  \sum_{n=0}^{M-1} \|\nabla \bfeta^{n+1}\|^2 } \leq  C \nu \Big(\big(h^{2m}+\|S_r\|_2h^{2m+2}\big)|||\bu|||^2_{2,m+1}+\sum_{j=r+1}^{d}\|\bpsi_j\|_1^2\lambda_j\Big), \label{es3}
\end{eqnarray}
Third term is bounded by using (\ref{assm2}) as follows,
\begin{eqnarray}
\lefteqn{\frac{\Delta t}{\nu}\sum_{n=0}^{M-1} \|\nabla \bfeta^{n+1}\|^2 \|\nabla \bu^{n+1}\|^2} \nonumber
\\
&\leq&\frac{\Delta t}{\nu} \sum_{n=0}^{M-1} \|\nabla \bu^{n+1}\|^2\Big(\big(h^{2m}+\|S_r\|_2h^{2m+2}\big)|||\bu|||^2_{2,m+1}+\sum_{j=r+1}^{d}\|\bpsi_j\|_1^2\lambda_j\Big)\nonumber\\
&\leq&{\nu}^{-1}|||\nabla \bu|||_{2,0}^2\Big(\big(h^{2m}+\|S_r\|_2h^{2m+2}\big)|||\bu|||^2_{2,m+1}+\sum_{j=r+1}^{d}\|\bpsi_j\|_1^2\lambda_j\Big)
\end{eqnarray}

To bound the fourth term, one can proceed as follows. Using $\|(I-P_R)\nabla \bfeta^{n+1}\|\leq \|\nabla \bfeta^{n+1}\|$ along with Lemma \ref{errpro} leads to 
\begin{eqnarray}
 \lefteqn{\Delta t\nu_T\sum_{n=0}^{M-1} \|(I-P_{R})\nabla (\bu^{n+1}-\bfeta^{n+1})\|^2 } \nonumber
 \\
 &\leq&  \Delta t\nu_T\sum_{n=0}^{M-1}\|(I-P_{R})\nabla \bu^{n+1}\|^2
  + \Delta t\nu_T\sum_{n=0}^{M-1} \|(I-P_{R})\nabla\bfeta^{n+1}\|^2) \nonumber
\\
&\leq &
 \Delta t\nu_T\sum_{n=0}^{M-1}(\|\nabla \bu^{n+1}-P_{R}\nabla \bu^{n+1} \|^2+\Delta t\nu_T\sum_{n=0}^{M-1} \norm{\nabla \bfeta^{n+1}}^2 \label{trupro}
  \end{eqnarray}
For the first term on the right-hand side of (\ref{trupro}), we use (\ref{l2}) to find that
\begin{eqnarray}   
 \Delta t\nu_T\sum_{n=0}^{M-1}\|\nabla \bu^{n+1}-P_{R}\nabla \bu^{n+1} \|^2 &\leq& C\Delta t \nu_T\frac{1}{M}\sum_{n=0}^{M-1} 
 \inf_{\bv_R\in \bfX^r}  \|\nabla \bu^{n+1}-\nabla \bv_R^{n+1} \|^2 \nonumber
 \\
 &\leq& C\frac{1}{M} \sum_{n=0}^{M-1}  \|\nabla \bu^{n+1}-\nabla {\bf\mathcal{U}}^{n+1}_R \|^2  \label{kkk2}
 \end{eqnarray} 
where ${\bf \mathcal{U}}_R $ is the large scale representation of the projection.
Now using Lemma \ref{errpro}, the final estimation for (\ref{trupro}) becomes
 \begin{eqnarray}
 \lefteqn{\Delta t\nu_T\sum_{n=0}^{M-1} \|(I-P_{R})\nabla (\bu^{n+1}-\bfeta^{n+1})\|^2}  \label{es5}
 \\
 &\leq& C  \nu_T \bigg(\Big( h^{2m}+(\|S_R\|_2+\|S_r\|_2)h^{2m+2}\Big)|||\bu|||^2_{2,m+1}
 +\sum_{j=R+1}^{d}\|\bpsi_j\|_1^2\lambda_j+\sum_{j=r+1}^{d}\|
  \bpsi_j\|_1^2\lambda_j\bigg).\nonumber 
\end{eqnarray}
Corollary \ref{staw}, the stability of $\bw_r$ and (\ref{assm2}) provide an estimation for the fifth term in the right-hand side of (\ref{ss10}):
\begin{eqnarray}
\lefteqn{\dfrac{\Delta t}{\nu}\sum_{n=0}^{M-1}{\|\bw_r^{n+1}\|\|\nabla \bw_r^{n+1}\|}\|\nabla\bfeta^{n+1}\|^2} \nonumber
\\
&\leq&C\dfrac{\Delta t}{\nu}\sum_{n=0}^{M-1}\|\nabla \bw_r^{n+1}\|^2\|\nabla\bfeta^{n+1}\|^2\nonumber\\
&\leq&  C {\nu}^{-2}(\|\bu_r^0\|^2+\nu^{-1}|||\bff|||^2_{2,-1})  \Big((h^{2m}+\|S_r\|_2h^{2m+2})|||\bu|||^2_{2,m+1}+\sum_{j=r+1}^{d}\|\bpsi_j\|_1^2\lambda_j\Big).\label{wrb}
\end{eqnarray}

The estimation of sixth term in the right hand side of (\ref{ss10}) uses approximation property (\ref{ap2}) to find
 \begin{eqnarray}
\frac{\Delta t}{\nu}\sum_{n=0}^{M-1} \|p^{n+1}-q_h\|^2 \leq \frac{C}{\nu}  h^{2m}|||p|||_{2,m}^2. \label{es7}
\end{eqnarray}
Finally, for the last term in the right hand side of (\ref{ss10}), Taylor series expansion with remainder in integral form is used along with Cauchy Schwarz and the triangle inequality to obtain
\begin{eqnarray}
\frac{\Delta t}{\nu} \sum_{n=0}^{M-1}\| \bu_t^{n+1}-\frac{\bu^{n+1}-\bu^{n}}{\Delta t} \|^2 
\leq C\nu^{-1}(\Delta t)^2 \|\bu_{tt}\|_{L^2(0,T;H^1(\Omega))}^2. \label{es8}
\end{eqnarray}
Collecting all the bounds (\ref{es3})-(\ref{es8}) for (\ref{ss10}) yields
\begin{eqnarray}
\lefteqn{\|\btheta_r^{M}\|^2+ \sum_{n=0}^{M-1}\bigg[\frac{1}{8}\Delta t\nu_T\|(I-P_{R})\nabla (\bphi_r^{n+1}+\btheta_r^{n+1})\|^2+ \nu\Delta t \|\nabla\bphi_r^{n+1}\|^2\bigg] }\nonumber
\\
&\leq& C\bigg[
\nu \bigg(\Big(h^{2m}+\|S_r\|_2h^{2m+2}
\Big)|||\bu|||^2_{2,m+1}
+\sum_{j=r+1}^{d}\|\bpsi_j\|_1^2\lambda_j\bigg) \nonumber
\\
&&+{\nu}^{-1}|||\nabla \bu|||_{2,0}^2 \bigg(\Big(h^{2m}+\|S_r\|_2h^{2m+2}
\Big)|||\bu|||^2_{2,m+1}
+\sum_{j=r+1}^{d}\|\bpsi_j\|_1^2\lambda_j\bigg)\nonumber
\\
&&+ \nu_T \bigg(\Big(h^{2m}+(\|S_R\|_2+\|S_r\|_2)h^{2m+2}\Big)|||\bu|||^2_{2,m+1}\nonumber
 \\ &&+\sum_{j=R+1}^{d}\|\bpsi_j\|_1^2\lambda_j+\sum_{j=r+1}^{d}\|\bpsi_j\|_1^2\lambda_j\bigg)+ {\nu}^{-2} (\|\bu_r^0\|^2+\nu^{-1}|||\bff|||^2_{2,-1})  \nonumber
 \\
 &&\times\bigg(\Big(h^{2m}+\|S_r\|_2h^{2m+2}
 \Big)|||\bu|||^2_{2,m+1}
 +\sum_{j=r+1}^{d}\|\bpsi_j\|_1^2\lambda_j\bigg)
 \nonumber
\\
&&+{\nu}^{-1} h^{2m}|||p|||_{2,m}^2+\nu^{-1}(\Delta t)^2 \|\bu_{tt}\|_{L^2(0,T;H^1(\Omega))}^2
+\frac{C\Delta t}{\nu^3}  \sum_{n=0}^{M-1} |||\nabla\bu|||_{\infty,0}^4\|\btheta_r^{n+1}\|^2 \bigg] \nonumber
\end{eqnarray}
Again using the assumption that $\Delta t \leq \frac{1}{8C}\bigg[\nu^{-3}{|||\nabla \bu|||_{\infty,0}^4}\bigg]^{-1}$ allows us to apply the discrete Gronwall inequality, which yields
\begin{eqnarray}
\lefteqn{\|\btheta_r^{M}\|^2+ \sum_{n=0}^{M-1}\bigg[\frac{1}{2}\Delta t\nu_T\|(I-P_{R})\nabla (\bphi_r^{n+1}+\btheta_r^{n+1})/2\|^2+ \nu\Delta t \|\nabla\bphi_r^{n+1}\|^2\bigg] }\nonumber
\\
&\leq& C\bigg[
\nu \bigg(\Big(h^{2m}+\|S_r\|_2h^{2m+2}
\Big)|||\bu|||^2_{2,m+1}
+\sum_{j=r+1}^{d}\|\bpsi_j\|_1^2\lambda_j\bigg)\nonumber
\\
&&+{\nu}^{-1}|||\nabla \bu|||_{2,0}^2\bigg(\Big(h^{2m}+\|S_r\|_2h^{2m+2} \Big)|||\bu|||^2_{2,m+1}+\sum_{j=r+1}^{d}\|\bpsi_j\|_1^2\lambda_j\bigg)\nonumber
\\
&&+ \nu_T \bigg(\Big(h^{2m}+(\|S_R\|_2+\|S_r\|_2)h^{2m+2}\Big)|||\bu|||^2_{2,m+1}\nonumber
\\&&+\sum_{j=R+1}^{d}\|\bpsi_j\|_1^2\lambda_j+\sum_{j=r+1}^{d}\|\bpsi_j\|_1^2\lambda_j\bigg)
+ {\nu}^{-2} (\|\bu_r^0\|^2+\nu^{-1}|||\bff|||^2_{2,-1})  \nonumber
\\
&&\times \bigg(\Big(h^{2m}+\|S_r\|_2h^{2m+2} \Big)|||\bu|||^2_{2,m+1}
+\sum_{j=r+1}^{d}\|\bpsi_j\|_1^2\lambda_j\bigg)\nonumber
\\
&&+{\nu}^{-1} h^{2m}|||p|||_{2,m}^2+\nu^{-1}(\Delta t)^2 \|\bu_{tt}\|_{L^2(0,T;H^1(\Omega))}^2\bigg]. \nonumber \label{gronwappll}
\end{eqnarray}
Finally, the triangle inequality is applied to produce the stated result.
\end{proof}

\subsection{Extension to second order time stepping}

We consider now an extension of Algorithm \ref{alg1} to BDF2 time stepping.  

\begin{Algorithm}\label{alg2} 
Let $\bff\in L^{2}(0,T;\bfH^{-1}(\Omega))$ and initial conditions $\bu_r^0$ and $\bu_r^{-1}$ be given in $\bfX^r$.  Then for n=0,1,2,...\\
{\bf Step 1.}
Calculate  $\bw_r^{n+1}\in \bfX^r$ satisfying $\forall {\bpsi\in \bfX^r} $,
\begin{eqnarray} \label{step1n}
\qquad\bigg(\frac{3\bw_r^{n+1}-4\bu_r^{n}+\bu_r^{n-1}}{2\Delta t},\bpsi \bigg)+b(\bw_r^{n+1},\bw_r^{n+1},\bpsi)+ \nu(\nabla \bw_r^{n+1},\nabla \bpsi)=(\bff^{n+1},\bpsi)
\end{eqnarray}
{\bf Step 2.} Post-process $\bw_r^{n+1}$ to obtain $\bu_r^{n+1}\in \bfX^r$ satisfying $\forall {\bpsi\in \bfX^r}$,
\begin{eqnarray}\label{step2n}
\qquad\bigg (\frac{\bw_r^{n+1}-\bu_r^{n+1}}{\Delta t},\bpsi\bigg ) =(\nu_T(I-P_{R})\nabla \frac{(\bw_r^{n+1}+\bu_r^{n+1})}{2} , (I-P_{R})\nabla \bpsi).
\end{eqnarray}
\end{Algorithm}

We note the post-processing step is exactly the same as in the backward Euler case.  Also as in the case of the backward Euler method above, without Step 2, Algorithm \ref{alg2} reduces to the classical Galerkin POD formulation for the NSE, although now using BDF2 time stepping.

We now prove stability of Algorithm \ref{alg2}.  A convergence result can be obtained by combining the ideas of Algorithm \ref{alg1}'s convergence proof with the stability proof below.  Such a proof is thus long and technical, but produces the expected result (i.e. same convergence as Algorithm \ref{alg1}, but second order in $\Delta t$ instead of first order).  This expected second order temporal convergence is illustrated in the numerical experiments section below.

\begin{lemma}\label{Lem:stan} 
The post-processed VMS-POD approximation is stable for the eddy viscosity term $\nu_T< 4\nu$ in the following sense:
	\begin{eqnarray}
\lefteqn{\|\bu_r^{M+1} \|^2+\|2\bu_r^{M+1}-\bu_r^M\|^2+2  \nu_T \Delta t \bigg\|(I-P_{R})\nabla \frac{(\bw_r^{M+1}+\bu_r^{M+1})}{2}\bigg\|^2} 	\nonumber
\\
&&+ 2\nu\Delta t \|\nabla\bw_r^{M+1}\|^2+\sum_{n=1}^{M}\|\bw_r^{n+1}-2\bu_r^{n}+\bu_r^{n-1}\|^2+(4 \nu-\nu_T)\frac{\Delta t}{2} \sum_{n=1}^{M-1} \|\nabla \bw_r^{n+1}\|^2 \nonumber
\\
&&\leq \|\bu_r^1\|^2+\|2\bu_r^{1}+\bu_r^{0}\|^2	+\frac{\nu_T \Delta t}{2} \|\nabla \bu_r^{1}\|^2
	+2\nu^{-1}|||\bff|||^2_{2,-1}.  \nonumber
	\label{stab3}
	\end{eqnarray}
\end{lemma}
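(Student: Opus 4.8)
The idea is to reproduce the backward Euler argument of Lemma~\ref{Lem:sta} with the BDF2 $G$-stability identity in place of the polarization identity, keeping track of the extra cross terms created by the post-processing step. First I would test Step~1, equation~(\ref{step1n}), with $\bpsi=\bw_r^{n+1}$; by skew-symmetry of $b$ the nonlinear term vanishes, leaving
\[
\Big(\tfrac{3\bw_r^{n+1}-4\bu_r^{n}+\bu_r^{n-1}}{2\Delta t},\,\bw_r^{n+1}\Big)+\nu\|\nabla\bw_r^{n+1}\|^2=(\bff^{n+1},\bw_r^{n+1}).
\]
Next I would apply the $G$-stability identity $(3a-4b+c,a)=\tfrac12\big(\|a\|^2+\|2a-b\|^2-\|b\|^2-\|2b-c\|^2+\|a-2b+c\|^2\big)$ with $a=\bw_r^{n+1}$, $b=\bu_r^{n}$, $c=\bu_r^{n-1}$ and multiply by $4\Delta t$. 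This gives an identity whose left side is $\big(\|\bw_r^{n+1}\|^2+\|2\bw_r^{n+1}-\bu_r^{n}\|^2\big)-\big(\|\bu_r^{n}\|^2+\|2\bu_r^{n}-\bu_r^{n-1}\|^2\big)$ plus the BDF2 numerical dissipation $\|\bw_r^{n+1}-2\bu_r^{n}+\bu_r^{n-1}\|^2$ and the viscous term $4\nu\Delta t\|\nabla\bw_r^{n+1}\|^2$, and whose right side is $4\Delta t(\bff^{n+1},\bw_r^{n+1})$. The quantity $\|\bw_r^{n+1}\|^2+\|2\bw_r^{n+1}-\bu_r^{n}\|^2$ is one BDF2 step ahead of $\|\bu_r^{n}\|^2+\|2\bu_r^{n}-\bu_r^{n-1}\|^2$, but it is written in $\bw_r^{n+1}$ rather than in the post-processed $\bu_r^{n+1}$ that feeds the next Step~1, so it must be re-expressed before the sum can telescope.

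To do this I would use Step~2, equation~(\ref{step2n}). Taking $\bpsi=\tfrac12(\bw_r^{n+1}+\bu_r^{n+1})$ in~(\ref{step2n}) gives the exact identity $\|\bw_r^{n+1}\|^2=\|\bu_r^{n+1}\|^2+2\nu_T\Delta t\,\big\|(I-P_{R})\nabla\tfrac{\bw_r^{n+1}+\bu_r^{n+1}}{2}\big\|^2$, which converts the first piece cleanly, the correction being a nonnegative dissipation. For the second piece I would write $2\bw_r^{n+1}-\bu_r^{n}=(2\bu_r^{n+1}-\bu_r^{n})+2(\bw_r^{n+1}-\bu_r^{n+1})$, expand, and regroup to get
\[
\|2\bw_r^{n+1}-\bu_r^{n}\|^2=\|2\bu_r^{n+1}-\bu_r^{n}\|^2+4\big(\bw_r^{n+1}+\bu_r^{n+1}-\bu_r^{n},\,\bw_r^{n+1}-\bu_r^{n+1}\big),
\]
and then rewrite the inner product with~(\ref{step2n}) using the admissible test function $\bpsi=\bw_r^{n+1}+\bu_r^{n+1}-\bu_r^{n}\in\bfX^r$, obtaining $\Delta t\,\nu_T\big(2(I-P_{R})\nabla\tfrac{\bw_r^{n+1}+\bu_r^{n+1}}{2}-(I-P_{R})\nabla\bu_r^{n},\,(I-P_{R})\nabla\tfrac{\bw_r^{n+1}+\bu_r^{n+1}}{2}\big)$. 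With both substitutions, summing over $n=1,\dots,M$ telescopes $\|\bu_r^{n+1}\|^2+\|2\bu_r^{n+1}-\bu_r^{n}\|^2$ down to the stated endpoint terms and accumulates the $\|\bw_r^{n+1}-2\bu_r^{n}+\bu_r^{n-1}\|^2$ contributions.

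The hard part is the sign-indefinite cross term $-4\Delta t\,\nu_T\big((I-P_{R})\nabla\tfrac{\bw_r^{n+1}+\bu_r^{n+1}}{2},\,(I-P_{R})\nabla\bu_r^{n}\big)$ produced above, which has to be absorbed rather than discarded. I would estimate it by Cauchy--Schwarz together with $\|(I-P_{R})\nabla\bv\|\le\|\nabla\bv\|$ and then distribute it via Young's inequality between the projection dissipation $2\nu_T\Delta t\|(I-P_{R})\nabla\tfrac{\bw_r^{n+1}+\bu_r^{n+1}}{2}\|^2$ the scheme supplies---so that only the $n=M$ such term survives on the left---and the viscous term $\nu\Delta t\|\nabla\bw_r^{n+1}\|^2$. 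Balancing these two absorptions is exactly where the hypothesis $\nu_T<4\nu$ enters; it leaves the viscous coefficient $(4\nu-\nu_T)/2$ inside the sum, the full term $2\nu\Delta t\|\nabla\bw_r^{M+1}\|^2$ at the last step, and---because of the one-step index shift between the $\bw_r$ and $\bu_r$ levels---the boundary contribution $\tfrac{\nu_T\Delta t}{2}\|\nabla\bu_r^{1}\|^2$ on the right-hand side. It then remains to bound the forcing by $4\Delta t(\bff^{n+1},\bw_r^{n+1})\le 2\nu\Delta t\|\nabla\bw_r^{n+1}\|^2+2\nu^{-1}\Delta t\|\bff^{n+1}\|_{-1}^2$, so that $2\nu\Delta t\|\nabla\bw_r^{n+1}\|^2$ is exactly covered by the Step~1 viscous dissipation, and to sum and collect; the forcing contributions assemble into $2\nu^{-1}|||\bff|||_{2,-1}^2$. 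No smallness condition on $\Delta t$ is needed, since---unlike in the convergence analysis---the convection term is eliminated exactly by the test function rather than estimated.
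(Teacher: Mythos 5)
Your plan follows essentially the same route as the paper's proof: test Step 1 with $\bw_r^{n+1}$ and apply the BDF2 $G$-stability identity, convert $\|\bw_r^{n+1}\|^2$ and $\|2\bw_r^{n+1}-\bu_r^n\|^2$ to the post-processed quantities via Step 2 (your regrouping $2\bw_r^{n+1}-\bu_r^n=(2\bu_r^{n+1}-\bu_r^n)+2(\bw_r^{n+1}-\bu_r^{n+1})$ lands on exactly the same $8\nu_T\Delta t\|(I-P_R)\nabla\tfrac{\bw+\bu}{2}\|^2-4\nu_T\Delta t((I-P_R)\nabla\tfrac{\bw+\bu}{2},(I-P_R)\nabla\bu_r^n)$ expression the paper derives), and absorb the cross term into the projection and viscous dissipation, which is precisely where $\nu_T<4\nu$ and the boundary terms arise. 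The only blemish is a dropped factor of $4$ in one intermediate display, which you correct in the subsequent discussion, so the argument is sound and matches the paper.
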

\begin{proof}
Note that if we let $\bpsi=\frac{(\bw_r^{n+1}+\bu_r^{n+1})}{2}$ in Step 2, the numerical dissipation induced from Step 2 is given by
\begin{eqnarray}
\|{\bw_r^{n+1}\|^2= \|\bu_r^{n+1}}\|^2+2 \nu_T\Delta t\bigg\|(I-P_{R})\nabla \frac{(\bw_r^{n+1}+\bu_r^{n+1})}{2}\bigg\|^2. \label{numdisn}
\end{eqnarray}

Letting $\bpsi=\bw^{n+1}_{r}$ in (\ref{step1n}) and using the identity $$a(3a-4b+c)=\dfrac{1}{2}\big((a^2-b^2)+(2a-b)^2-(2b-c)^2+(a-2b+c)^2\big)$$
yields	
\begin{eqnarray}
&&\frac{1}{4\Delta t}\|\bw_r^{n+1}\|^2-\frac{1}{4\Delta t}\|\bu_r^n\|^2+\frac{1}{ 4\Delta t}(\|2\bw_r^{n+1}-\bu_r^{n}\|^2-\|2\bu_r^n-\bu_r^{n-1}\|^2)\nonumber
\\
&&+\frac{1}{ 4\Delta t}\|\bw_r^{n+1}-2\bu_r^{n}+\bu_r^{n-1}\|^2+\nu \|\nabla \bw_r^{n+1}\|^2=(\bff^{n+1},\bw_r^{n+1}). \label{stab1n}
\end{eqnarray}
Substitute (\ref{numdisn}) in (\ref{stab1n}), multiply both sides by $4\Delta t$, add $\|2\bu_r^{n+1}-\bu_r^n\|^2$ to both sides, and then apply the Cauchy-Schwarz inequality to get
\begin{eqnarray}
&&\|\bu_r^{n+1} \|^2-\|\bu_r^n\|^2+2 \nu_T\Delta t\bigg\|\|(I-P_{R})\nabla \frac{(\bw_r^{n+1}+\bu_r^{n+1})}{2}\bigg\|^2\nonumber
\\
&&+	(\|2 \bw_r^{n+1}-\bu_r^{n+1}\|^2-\|2\bu_r^{n+1}-\bu_r^n\|^2)+ (\|2\bu_r^{n+1}-\bu_r^n\|^2-\|2\bu_r^n-\bu_r^{n-1}\|^2) \nonumber
\\[5pt]
&&	+\|\bw_r^{n+1}-2\bu_r^{n}+\bu_r^{n-1}\|^2+2\nu \Delta t  \|\nabla \bw_r^{n+1}\|^2 \nonumber
\\
&&\leq C \nu^{-1}\Delta t\|\bff^{n+1}\|_{-1}^2.
 \label{stab1ni}
\end{eqnarray}
We now consider the term $\|2\bw_r^{n+1}-\bu_r^{n}\|^2-\|2\bu_r^{n+1}-\bu_r^n\|^2$ in (\ref{stab1ni}) . By using the properties of $L^2$ inner product, the equality (\ref{numdisn}) and rearranging terms gives
\begin{eqnarray}
\lefteqn{\|2\bw_r^{n+1}-\bu_r^{n}\|^2-\|2\bu_r^{n+1}-\bu_r^n\|^2} \nonumber		
\\		&=&(2\bw_r^{n+1}-\bu_r^{n},2\bw_r^{n+1}-\bu_r^{n})-(2\bu_r^{n+1}-\bu_r^n,2\bu_r^{n+1}-\bu_r^n)\nonumber\\
		&=&8(\bw_r^{n+1}-\bu_r^{n+1},-\frac{\bu_r^{n}}{2})+4(\|\bw_r^{n+1}\|^2-\|\bu_r^{n+1}\|^2)\nonumber\\
	&=&8  \nu_T\Delta t\big((I-P_R)\nabla \dfrac{(\bw_r^{n+1}+\bu_r^{n+1})}{2},(I-P_R)\nabla (-\frac{\bu_r^{n}}{2})\big)\nonumber\\
		&&+ 8 \nu_T\Delta t \big((I-P_R)\nabla \dfrac{(\bw_r^{n+1}+\bu_r^{n+1})}{2},(I-P_R)\nabla \dfrac{(\bw_r^{n+1}+\bu_r^{n+1})}{2} \big)\nonumber\\
		&=& 8 \nu_T\Delta t ((I-P_R)\nabla \frac{(\bw_r^{n+1}+\bu_r^{n+1})}{2},(I-P_R)\nabla \frac{(\bw_r^{n+1}+\bu_r^{n+1}-\bu_r^n)}{2}\nonumber\\
		&=& 8  \nu_T\Delta t \|(I-P_R)\nabla \frac{(\bw_r^{n+1}+\bu_r^{n+1}-\bu_r^n)}{2}\|^2\nonumber\\
		&&-8  \nu_T \Delta t ((I-P_R)\nabla (-\frac{\bu_r^n}{2}),(I-P_R)\nabla \frac{(\bw_r^{n+1}+\bu_r^{n+1}-\bu_r^n)}{2}). \label{extrmm1}
		\end{eqnarray}
Inserting (\ref{extrmm1}) in (\ref{stab1ni}) and applying Cauchy-Schwarz and Young's inequalities gives
\begin{eqnarray}
\lefteqn{\|\bu_r^{n+1} \|^2-\|\bu_r^n\|^2+\|2\bu_r^{n+1}-\bu_r^n\|^2	-\|2\bu_r^n-\bu_r^{n-1}\|^2} \nonumber
\\
&&+2\nu_T \Delta t \bigg\|(I-P_{R})\nabla \frac{(\bw_r^{n+1}+\bu_r^{n+1})}{2}\bigg\|^2
+\|\bw_r^{n+1}-2\bu_r^{n}+\bu_r^{n-1}\|^2
+2 \nu \Delta t  \|\nabla \bw_r^{n+1}\|^2 \nonumber
\\
&\leq& C\nu^{-1}\Delta t \|\bff^{n+1}\|_{-1}^2 
+2 \nu_T \Delta t \|(I-P_R)\nabla (\frac{\bu_r^n}{2})\|^2. \nonumber
\end{eqnarray}
Adding and subtracting terms for the last term in the previous inequality with the use of $\|I-P_R\|\leq 1$, we get
	\begin{eqnarray}
\lefteqn{
\|\bu_r^{n+1} \|^2-\|\bu_r^n\|^2+\|2\bu_r^{n+1}-\bu_r^n\|^2	-\|2\bu_r^n-\bu_r^{n-1}\|^2} 
\nonumber
\\
&&+2  \nu_T \Delta t\bigg\|(I-P_{R})\nabla \frac{(\bw_r^{n+1}+\bu_r^{n+1})}{2}\bigg\|^2+\|\bw_r^{n+1}-2\bu_r^{n}+\bu_r^{n-1}\|^2
+2 \nu \Delta t  \|\nabla \bw_r^{n+1}\|^2\nonumber
\\
&\leq& C \nu^{-1}\Delta t\|\bff^{n+1}\|_{-1}^2 +2\nu_T \Delta t\bigg\|(I-P_R)\nabla (\frac{\bw_r^n+\bu_r^n}{2})\bigg\|^2+\frac{\nu_T\Delta t }{2}  \|(I-P_R)\nabla \bw_r^n\|^2 \nonumber
\\&\leq& C\nu^{-1}\Delta t\|\bff^{n+1}\|_{-1}^2 
+2 \nu_T \Delta t \bigg\|(I-P_R)\nabla (\frac{\bw_r^n+\bu_r^n}{2})\bigg\|^2 
+\frac{ \nu_T\Delta t }{2}  \|\nabla \bw_r^n\|^2. \nonumber  
\end{eqnarray}
Summing over the time step $i=1,\cdots,M$ gives 	
\begin{eqnarray}
\lefteqn{\|\bu_r^{M+1} \|^2+\|2\bu_r^{M+1}-\bu_r^M\|^2+2  \nu_T \Delta t\bigg\|(I-P_{R})\nabla \frac{(\bw_r^{M+1}+\bu_r^{M+1})}{2}\bigg\|^2} 	\nonumber
\\
&&+\sum_{n=1}^{M}\|\bw_r^{n+1}-2\bu_r^{n}+\bu_r^{n-1}\|^2+2\nu \Delta t\|\nabla \bw_r^{M+1}\|^2+(4 \nu-\nu_T) \frac{\Delta t}{2}\sum_{n=1}^{M-1}  \|\nabla \bw_r^{n+1}\|^2 \nonumber
\\
&\leq& \|\bu_r^1\|^2+\|2\bu_r^{1}+\bu_r^{0}\|^2	+\frac{ \nu_T 
\Delta t}{2}\|\nabla \bw_r^{1}\|^2+2  \nu_T \Delta t \bigg\|(I-P_{R})\nabla \frac{(\bw_r^{1}+\bu_r^{1})}{2}\bigg\|^2 \nonumber
\\
&&+ 2 \nu^{-1}|||\bff|||^2_{2,-1}  \nonumber
\label{stab1ni2}
\end{eqnarray}
With the assumption $\bw_r^0=\bw_r^1=0$ and $\|I-P_R\|<1$, we obtain the stated result.
\end{proof}

\section{Numerical Studies}

This section gives results for three numerical experiments.  In all cases we use Algorithm \ref{alg2}, i.e. the scheme with second order time stepping.  Our first test considers the predicted convergence rates of the previous section, with respect to varying $R$ and $\Delta t$.  For the second test, we compare accuracy of the proposed VMS-POD scheme compared with the usual Galerkin POD method (i.e. unstabilized POD, computed by eliminating the post-processing step of the VMS-POD) in 2D channel flow past a cylinder.  Finally we consider the VMS-POD for a 3D turbulent channel flow simulation.

\begin{figure}[h!]
\begin{center}
\includegraphics[width=0.7\textwidth,height=0.24\textwidth, trim=0 0 0 0, clip]{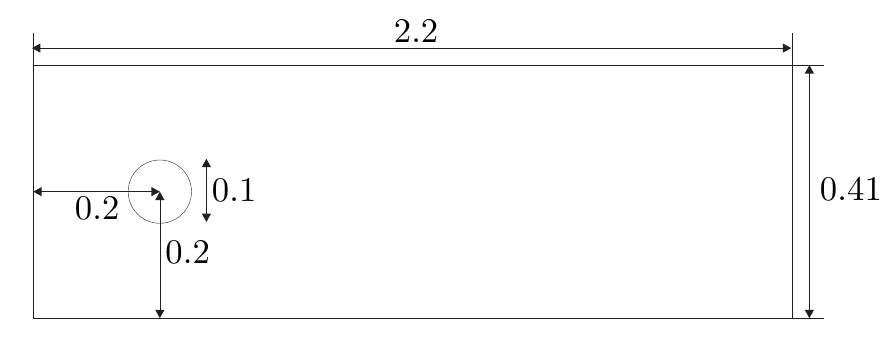}
\end{center}
\caption{\label{cyldomain} Shown above is the channel flow around a cylinder domain.}
\end{figure}

Our first two numerical tests use the test problem of 2D channel flow past a cylinder.  Here, the domain is a $2.2\times 0.41$ rectangle
 with a circle radius $=0.05$ centered at $(0.2,0.2)$ (see figure \ref{cyldomain}).  The test problem uses no slip boundary conditions for the walls and cylinder, and the time dependent inflow and outflow profiles are given by 
\begin{align*}
& u_{1}(0,y,t)=u_{1}(2.2,y,t)=\frac{6}{0.41^{2}}y(0.41-y) \, ,
\\
& u_{2}(0,y,t)=u_{2}(2.2,y,t)=0.
\end{align*}
The kinematic viscosity $\nu=10^{-3}$, and there is no forcing ($\bff={\bf 0}$).  The POD is created as described in section 2, by taking snapshots from a BDF2-finite element DNS simulation using Taylor-Hood elements, after a periodic-in-time solution is reached (a more detailed description of the setup is given in \cite{caiazzo2014numerical}).  The tests with the cylinder problem use the projection of the T=7 DNS solution as the initial condition, and run for $10$ time units.

\subsection{Numerical test 1: Convergence in $R$ and $\Delta t$}

We now test the predicted convergence rates of the previous section.  This is a particularly difficult task for this problem, as the parameters are the spatial mesh width $h$, time step $\Delta t$, POD cutoff $r$, and VMS cutoff $R$.  We assume the spatial mesh width is sufficiently small so that this error source is negligible.  Our particular interest here is the scaling of the error with the VMS cutoff $R$ and with the time step size $\Delta t$.  Due to having several parameters, to see convergence with respect to a particular parameter, it must be part of the dominant error source.  Hence in our tests, different parameter choices are made to see the various scalings.

To test the scaling of the error with $R$, we fix $r=12$, $\Delta t=0.002$, $\nu_T=0.0003$, and compute with varying $R$.  The error estimates are depend on $R$ via the term $\varepsilon=\sqrt{\sum\limits_{j=R+1}^{d}\|\varphi_j\|_1^2 \lambda_j}$, as well as $\| S_R \|_2$, and our interest
is the scaling with $\varepsilon$.  We note that for sufficiently large $R$, the term $\| S_R \|_2$ can become sufficiently large so that it becomes a dominant error source.  Results for this test are shown in table \ref{Rconv}, and we observe the expected convergence rate of approximately $1/2$ in the $L^2(0,T;H^1(\Omega))$ norm, and seemingly a higher rate in $L^{\infty}(0,T;L^2(\Omega))$ .  However, for larger $R$, we see a deterioration of the rate, and by $R=11$, the error increases (which is expected, due to increase in $\| S_R \|_2$ with $R$).  

To test the scaling with respect to $\Delta t$, we fix $r=16$, $R=14$, and vary $\Delta t$.  Errors and rates are given in table \ref{tconv}, and we observe rates consistent with second order.

\begin{table}[h!] 
	\begin{center}
		\begin{tabular}{ |c|c|c|c|cc|cc|}
			\hline
			$r$& $R$  & $\varepsilon$ & $\Delta t$ &  $ \|\bu-\bu_r\|_{L^{\infty}(L^2)}$ & rate & $\|\nabla (\bu-\bu_r)\|_{L^{2}(H^1)}$& rate \\ \hline
			12&3 &  18.1675 & 0.002 & 0.0294& - & 1.4060 & - \\ 
			12&5 &  10.4747 & 0.002 &0.0158& 1.13  & 1.0639& 0.51 \\ 
			12&7 & 4.7609&  0.002 &0.0076& 0.93  & 0.7039& 0.52\\ 
			12&9 & 2.6534 &  0.002 &0.0034& 1.37 & 0.6131& 0.24\\ 
			12&11 & 1.2920 & 0.002 &0.0090& -1.35  & 0.7580& -0.29\\ 
			\hline
		\end{tabular}
		\caption{\label{Rconv} Convergence of the VMS-POD for varying $R$.}
	\end{center}
\end{table}

\begin{table}[h!] 
	\begin{center}
		\begin{tabular}{ |c|c|c|cc|cc|}
			\hline
			$r$& $R$  &  $\Delta t$ &  $ \|\bu-\bu_r\|_{L^{\infty}(L^2)}$ & rate & $\|\nabla (\bu-\bu_r)\|_{L^{2}(H^1)}$& rate \\ \hline
			16&14&	0.032  &0.4343&-&15.6394&-\\ 
			16&14&	0.016  &0.3587&0.2759&10.8125&0.5325\\
			16&14&	0.008 &0.1296&1.4687&4.5266&1.2562\\ 
			16&14&	0.004 &0.0267&2.2792&1.0657&2.0866\\ 
%
			\hline
		\end{tabular}
		\caption{\label{tconv} Convergence of the VMS-POD for varying $\Delta t$.}
	\end{center}
\end{table}

\subsection{Numerical test 2: Error comparison of VMS-POD versus POD-G for 2D channel flow past a cylinder}

We now consider error comparison of the VMS-POD again the standard POD-G method.  The statistics of interest are the maximal drag and the maximal lift coefficients at the cylinder. The reference intervals are given for $c_{dmax}$ and $c_{lmax}$ in \cite{ST96},
\begin{eqnarray}\label{ref}
c_{dmax}^{ref} \in [3.22,3.24], \quad c_{lmax}^{ref} \in [0.98,1.02]
\end{eqnarray}
and recent computations of \cite{caiazzo2014numerical}, as well as our DNS that created the snapshots, are in agreement with these numbers.

We compute with $r=8$ modes, and find that the POD-G method is not good, and in particular we observe in figure \ref{fig:energy} that the energy is growing (seemingly) linearly with time.  But $T=10$, a significant increase in energy has occurred, leading to very inaccurate lift and drag predictions as well.

\begin{figure}[h!]
\centering
\includegraphics[width=.9\linewidth, height=0.17\textheight]{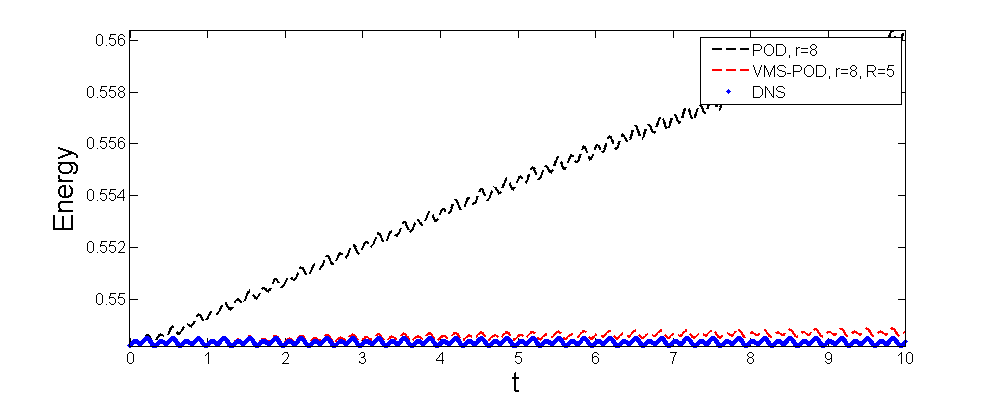}
\includegraphics[width=.9\linewidth, height=0.17\textheight]{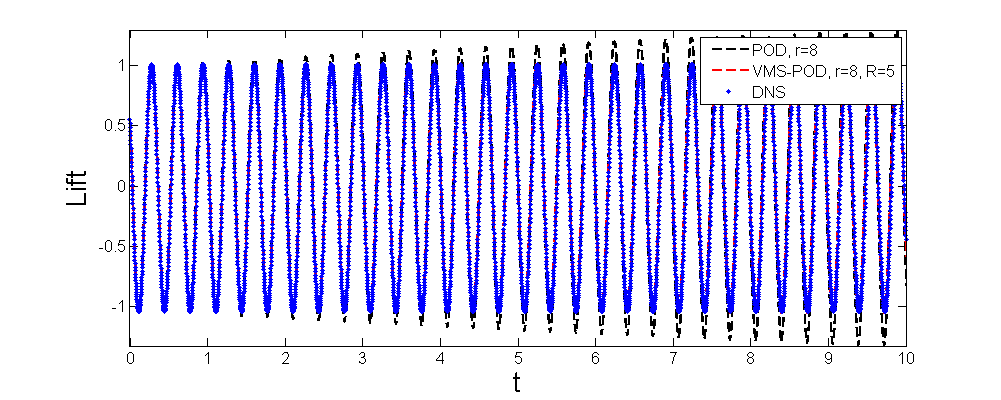}
\includegraphics[width=.9\linewidth, height=0.17\textheight]{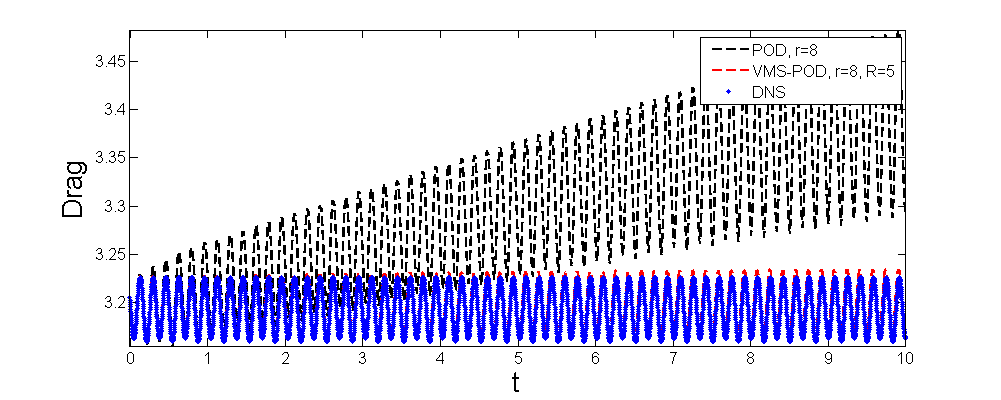}
\caption{Energy, lift and drag for DNS, POD-G and VMS-POD.}
\label{fig:energy}
\end{figure}

The VMS-POD does a much better job with prediction of energy, lift and drag, however.  Shown in figure \ref{fig:energy} is the energy, lift and drag with $R=5$ (and $\nu_T=0.0003$ was found to be optimal to within $\pm$ $0.000005$, in terms of matching the DNS energy at $T=10$). We note that with POD, such a parameter optimization is quite cheap.  Hence this is an excellent example of how the post-processing step of VMS-POD can remove the nonphysical energy growth that occurs with POD-G, and provide good reduced order solutions.

\subsection{Numerical test 3: VMS-POD for $Re_{\tau}$=395 turbulent channel flow}

For our last test, we consider the proposed VMS-POD for a benchmark turbulent channel flow problem at $Re_{\tau}=395$ \cite{MKM99,JR07}.
The domain $\Omega = (-2\pi,2\pi)\times (0,2) \times \left(-2\pi/3,2\pi/3\right)$, and for boundary conditions, no slip is enforced on the top and bottom walls $y=2$ and $y=0$, and periodic boundary conditions are enforced the remaining sides.  The kinematic viscosity is taken to be $\nu=\frac{1}{Re_{\tau}}$, and the flow is forced in the $x$-direction via ${\bf f}=\langle 1,0,0 \rangle$, and is also dynamically adjusted at each time step as in \cite{JR07} to maintain the bulk velocity.  We compute snapshots using the rNS-$\alpha$ model and scheme from \cite{RKB17,RZZ17}, by saving snapshots at each time step ($\Delta t=0.002$) of the simulation from $T=60$ to $T=70$, and then create the POD basis as described in section 2.  Plots of some of the POD basis functions are displayed in figure \ref{tcf1}.  We construct the reduced order model using $r=10$ modes, and will test the ability of the VMS-POD to give stable solutions.  We note that even though the snapshots were created with a turbulence model, we use the VMS-POD in the same form as above (i.e. step 1 is NSE, not the turbulence model).  

\begin{figure}[h!]
\centering
$\bpsi_1$ \hspace{2.3in} $\bpsi_2$\\
\includegraphics[width=.45\linewidth, height=0.3\textheight]{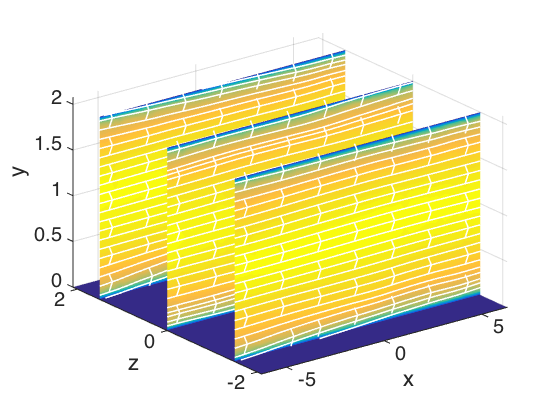}
\includegraphics[width=.45\linewidth, height=0.3\textheight]{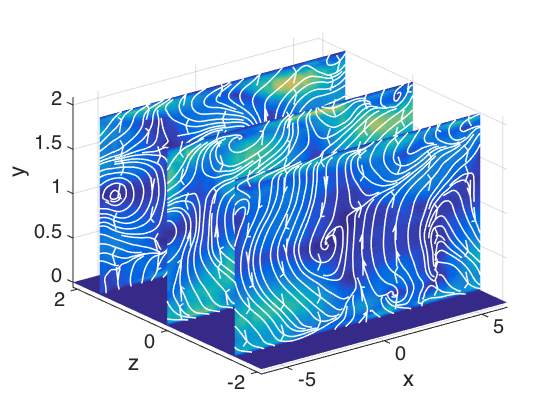}\\
$\bpsi_5$ \hspace{2.3in} $\bpsi_{10}$\\
\includegraphics[width=.45\linewidth, height=0.3\textheight]{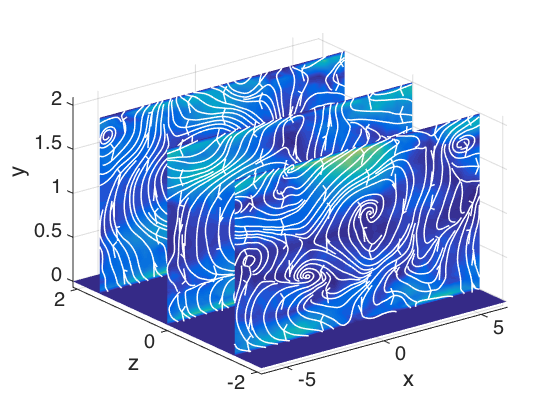}
\includegraphics[width=.45\linewidth, height=0.3\textheight]{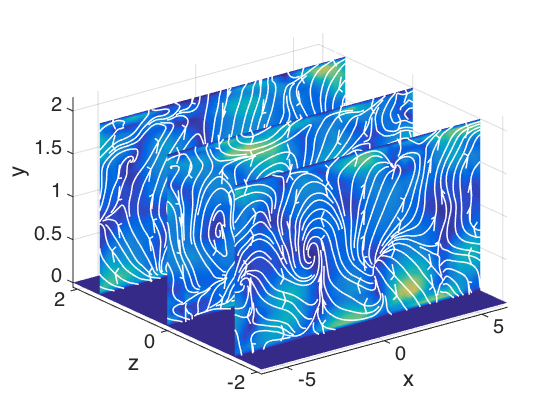}
\caption{\label{tcf1} Shown above are slices of velocity streamlines for the first, second, fifth and tenth modes from the POD for turbulent channel flow with rNS-$\alpha$ at $Re_{\tau}=395$.}
\end{figure}

Turbulent channel flow is well known to be a challenging numerical problem which causes many codes to fail and/or give poor solutions.  Underresolved simulations suffer from a nonphysical accumulation of energy near the smallest resolved scales.  In time, this accumulation of energy creates spurious oscillations, inaccuracies, and instability, often causing simulations to fail \cite{LR12,BIL06}.  Thus, we test here the ability of the VMS-POD to produce a stable solution.  We use the projections of the $T=60$ and $T=60.002$ turbulence model solutions as the initial conditions, and run the reduced order models for 5 time units.

We first test the usual POD-G, without any stabilization (i.e. $R=r=10$).  Energy versus time is shown for this simulation in figure \ref{tcf0}, and we observe that there is an energy blowup.  The instability occurs around $t=1$, and then the energy grows exponentially.

\begin{figure}[h!]
\centering
\includegraphics[width=.49\linewidth, height=0.22\textheight]{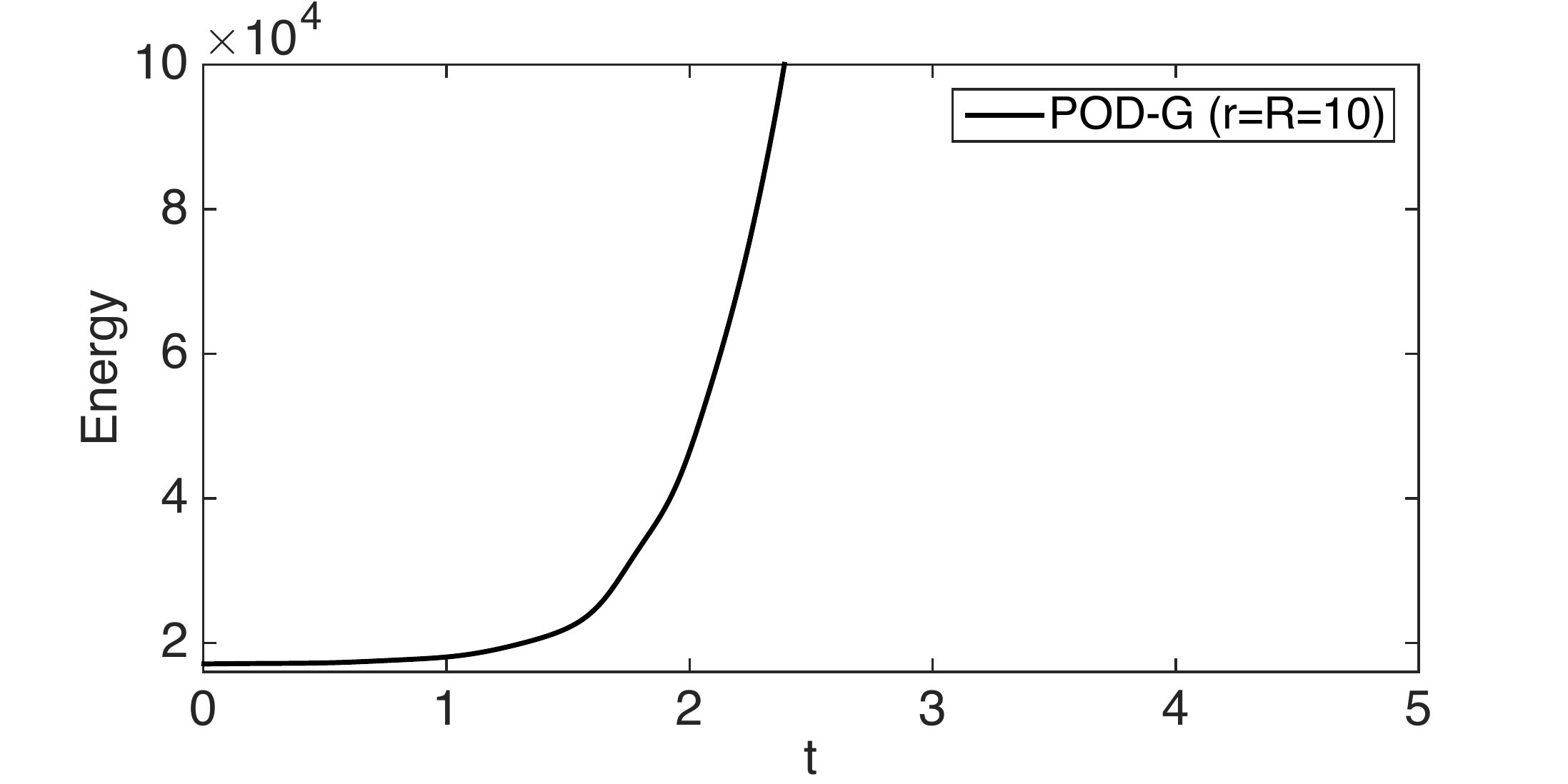}
\caption{\label{tcf0} Energy versus time (left) for the POD-G simulation with $R=10$.}
\end{figure}

We now test the VMS-POD with the $r=10$, and using $R=3$, $2$, and $1$ ($R=3$ is the largest $R$ for which we could get stable energy up to $t=5$).  Plots for energy versus time and mean streamwise velocity (average taken in both periodic directions and in time) are shown in figures \ref{tcf2}-\ref{tcf4} for these choices of $R$, and for each $R$ we test with several values of $\nu_T$.  The main observation we make from these results is that with sufficient stabilization using $R$ and $\nu_T$, the VMS-POD is able to produce energy stable solutions whose mean streamwise velocity profile matches the DNS values of Moser et al \cite{MKM99} (we note that the first mode $\bpsi_1$ also matches this mean streamwise profile as well).  For smaller values of $R$, smaller values of $\nu_T$ can be sufficient to obtain energy stable solutions.

\begin{figure}[h!]
\centering
\includegraphics[width=.49\linewidth, height=0.22\textheight]{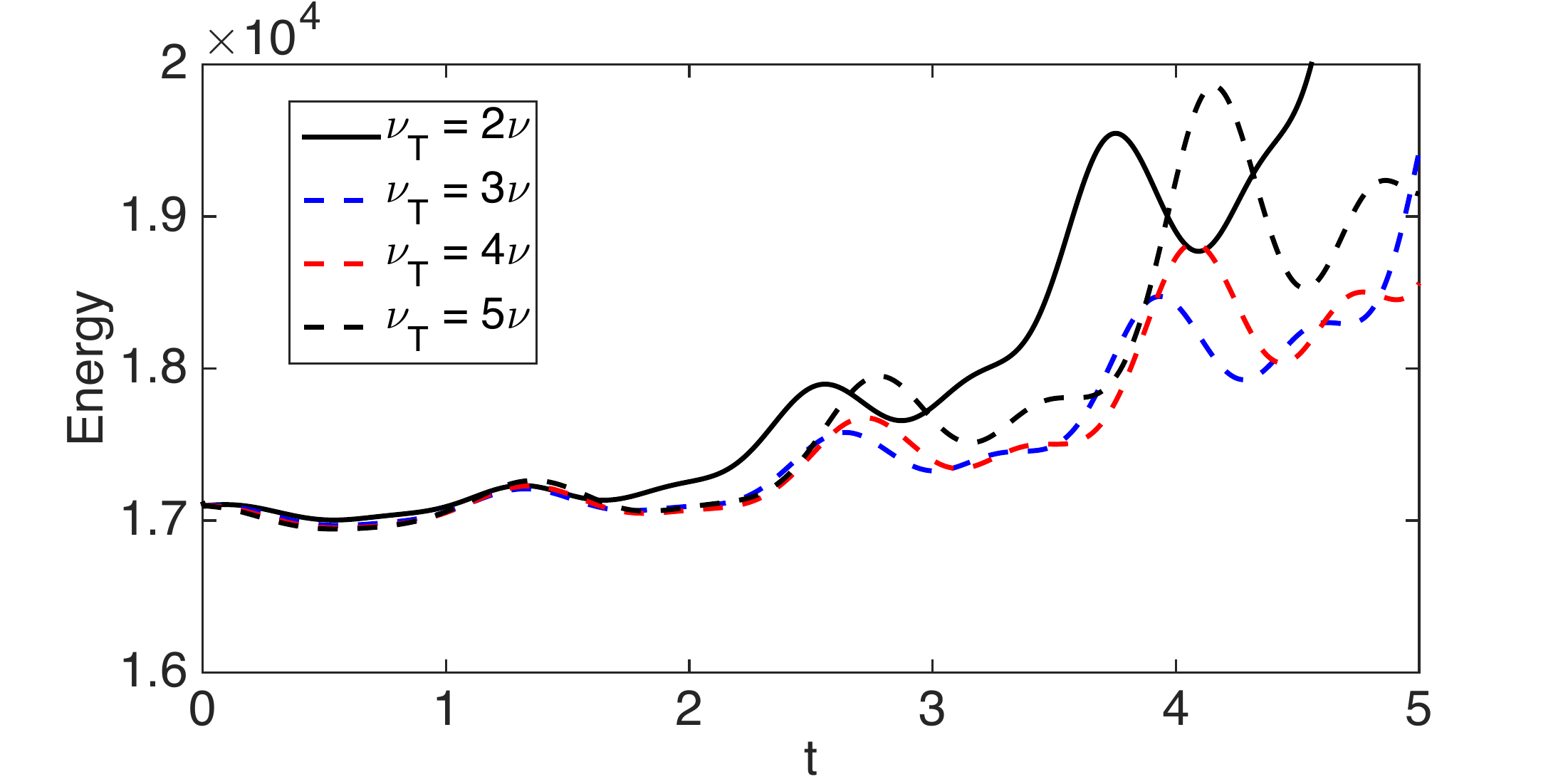}
\includegraphics[width=.49\linewidth, height=0.22\textheight]{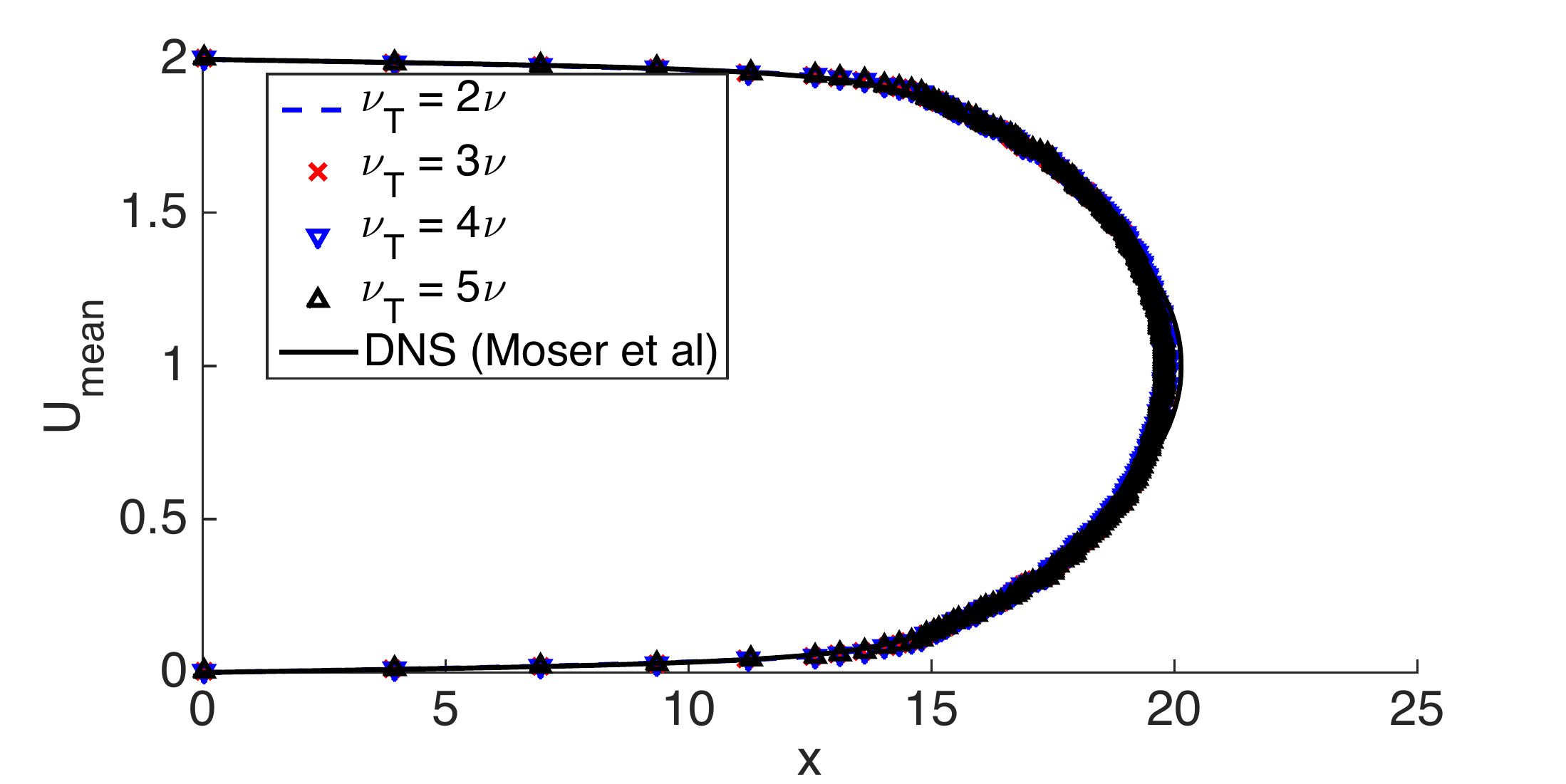}
\caption{\label{tcf2} Energy versus time (left) and mean streamwise velocity (right) with $r=10$, $R=3$ and varying $\nu_T$.}
\end{figure}

\begin{figure}[h!]
\centering
\includegraphics[width=.49\linewidth, height=0.22\textheight]{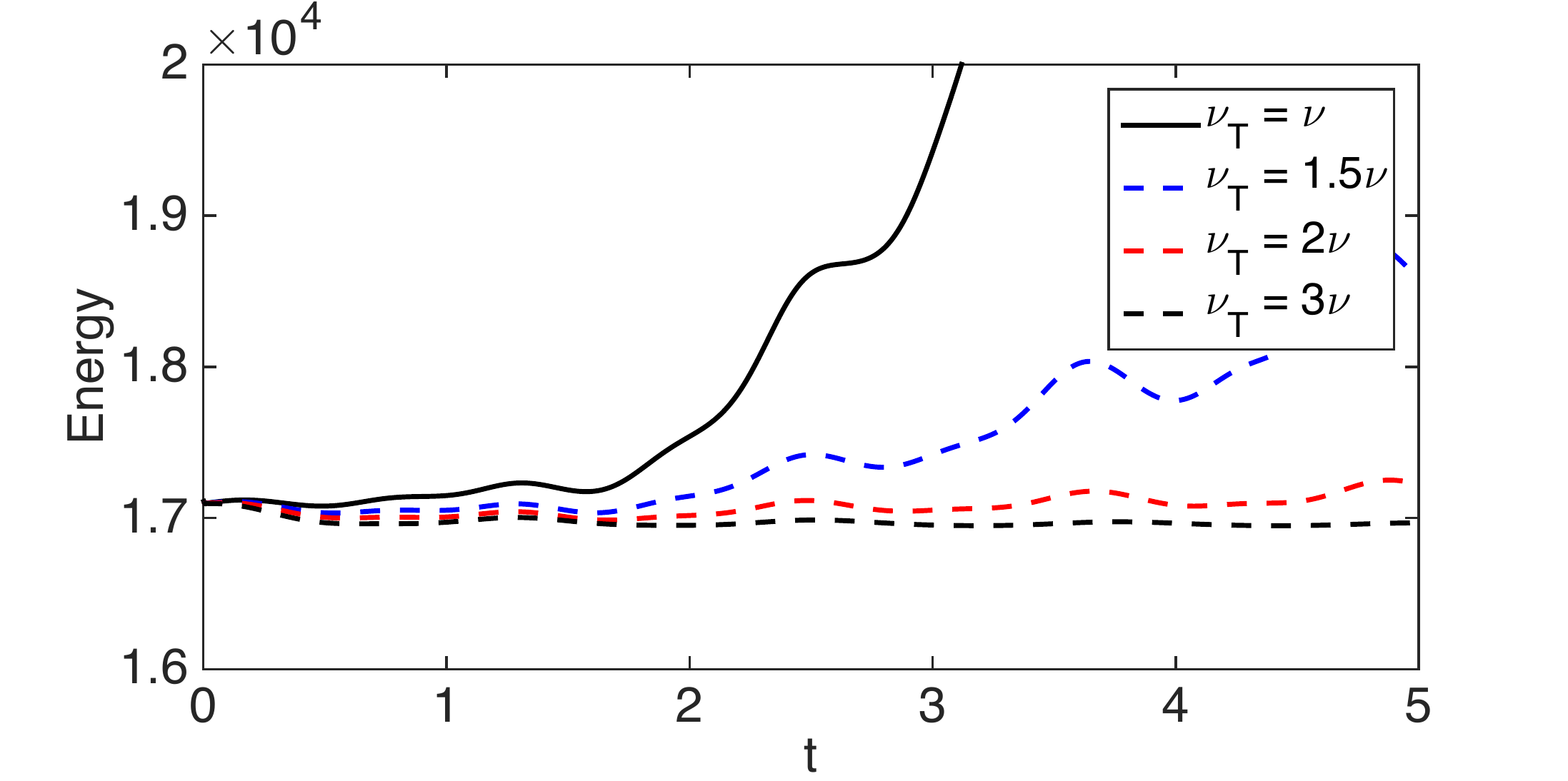}
\includegraphics[width=.49\linewidth, height=0.22\textheight]{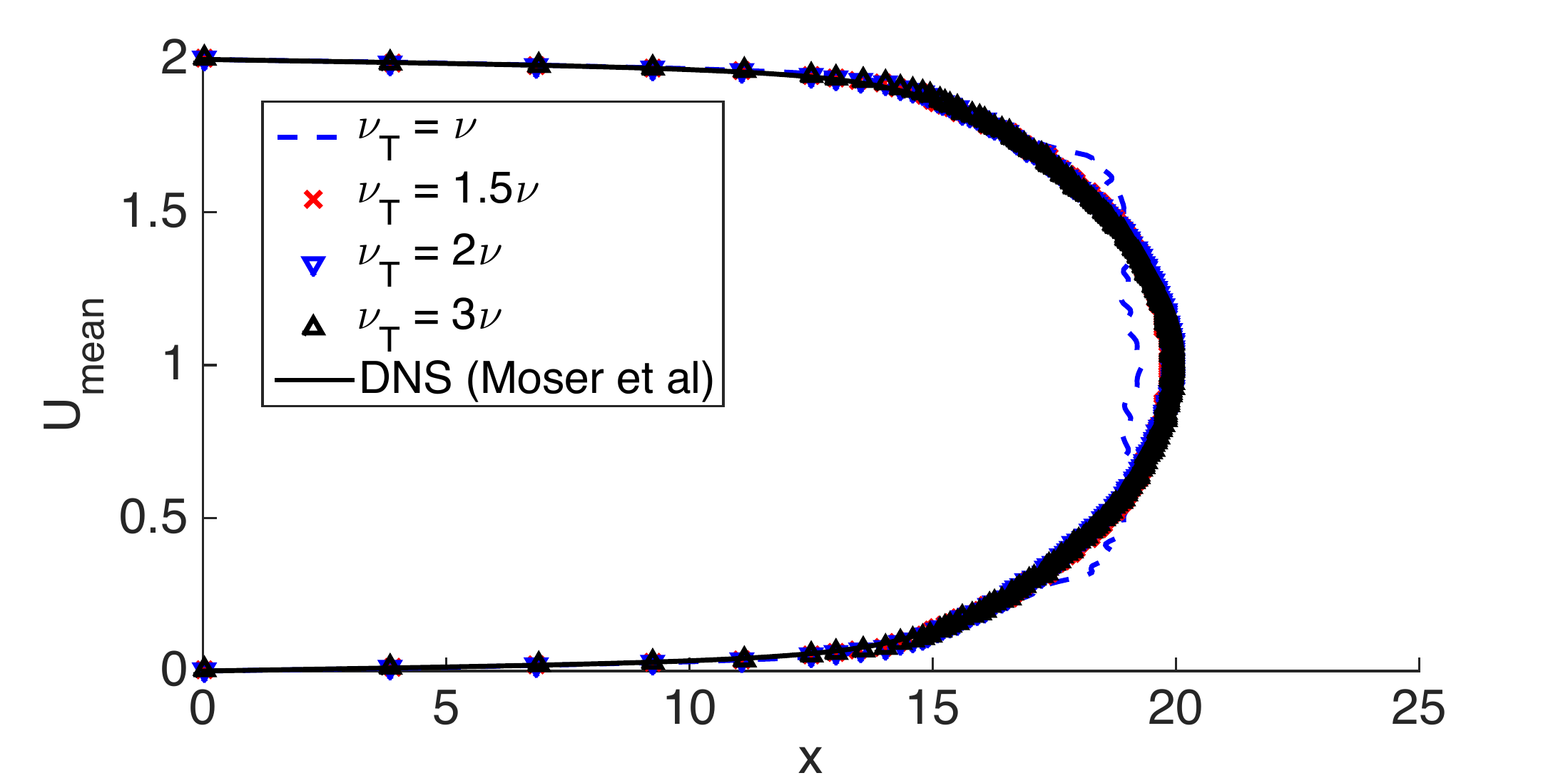}
\caption{\label{tcf3} Energy versus time (left) and mean streamwise velocity (right) with $r=10$, $R=2$ and varying $\nu_T$.}
\end{figure}

\begin{figure}[h!]
\centering
\includegraphics[width=.49\linewidth, height=0.22\textheight]{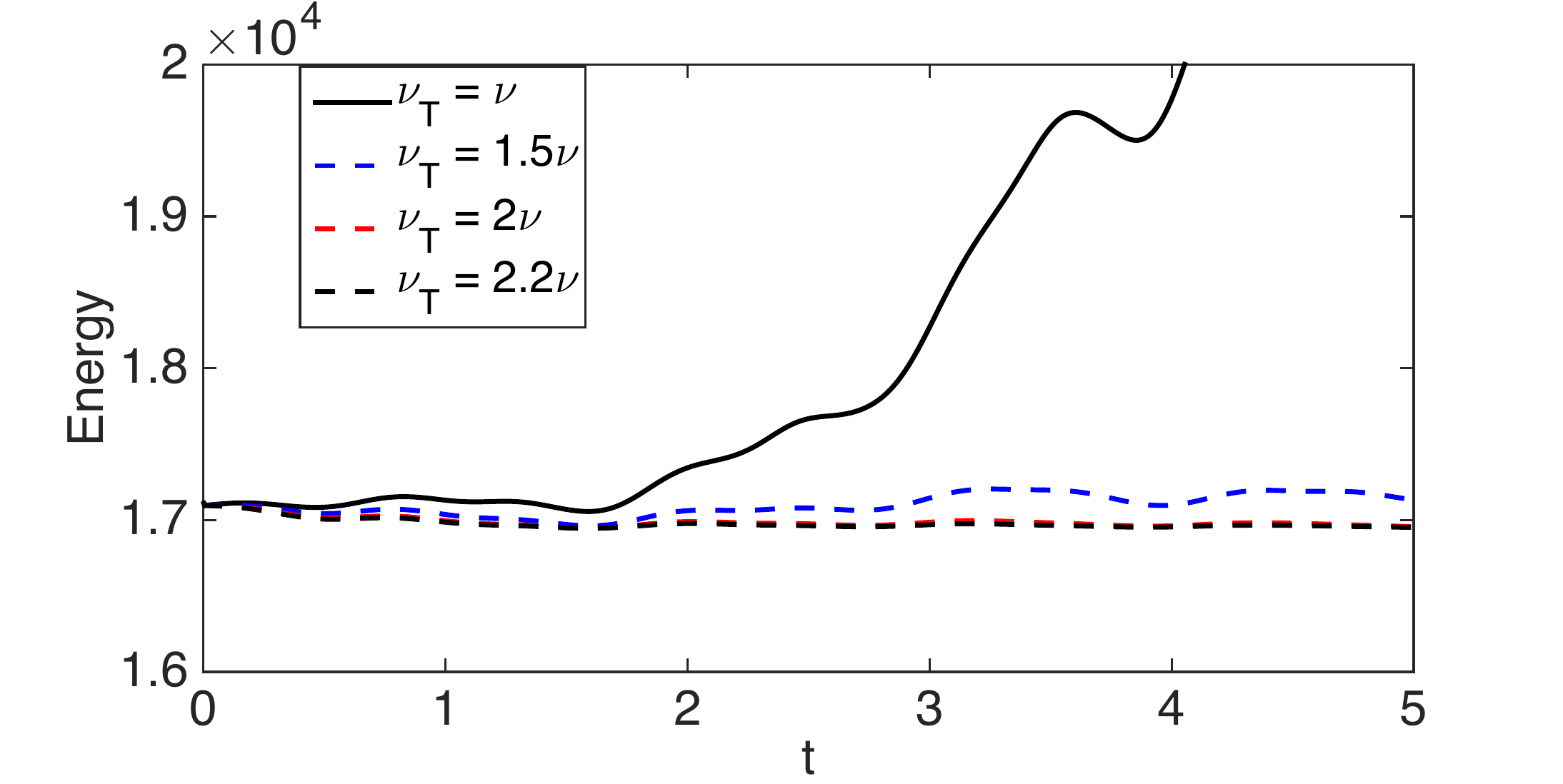}
\includegraphics[width=.49\linewidth, height=0.22\textheight]{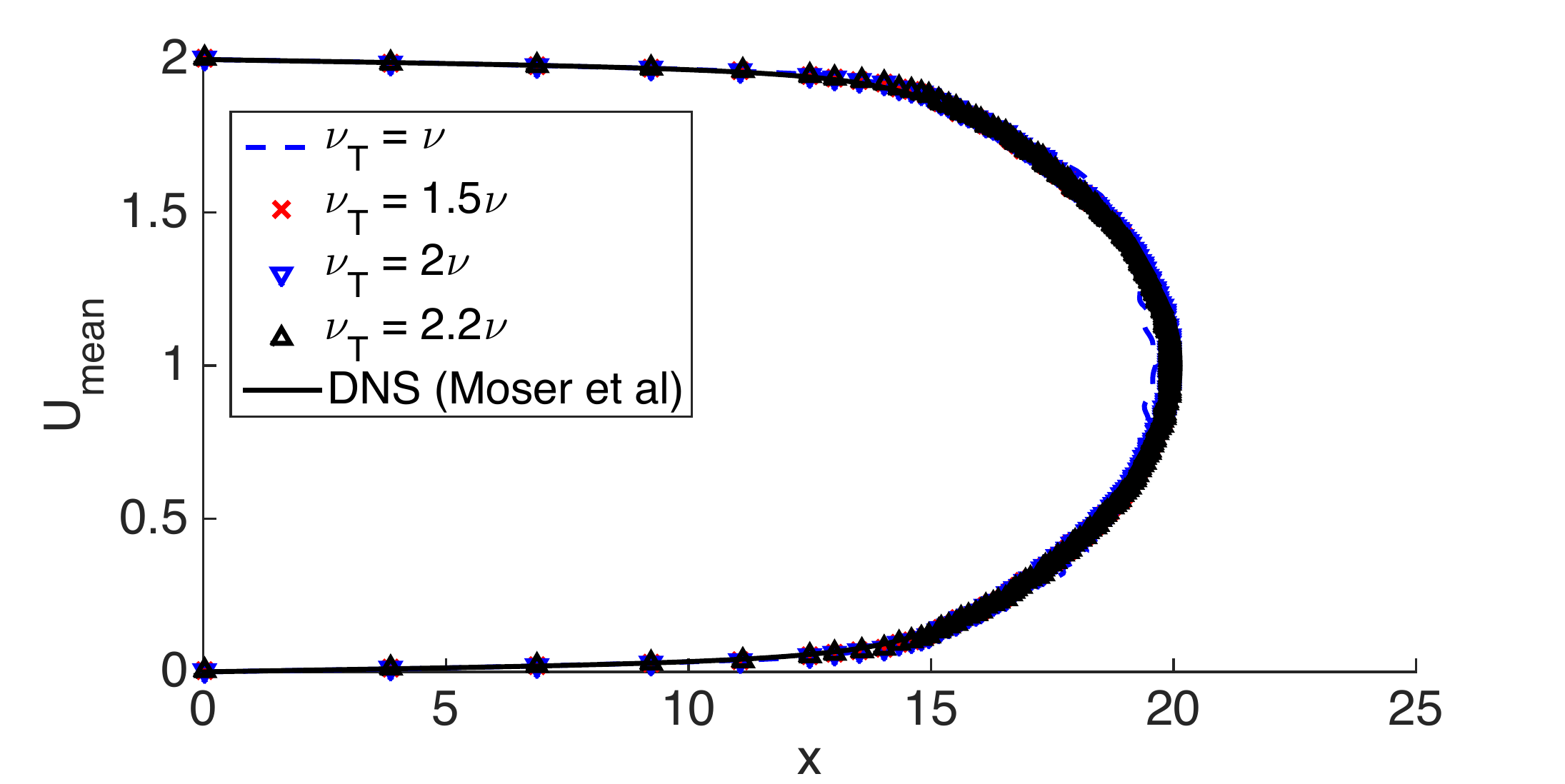}
\caption{\label{tcf4} Energy versus time (left) and mean streamwise velocity (right) with $r=10$, $R=1$ and varying $\nu_T$.}
\end{figure}

\section{Conclusion}
We proposed, analyzed and tested a VMS-POD method for incompressible NSE simulation, where the stabilization is completely decoupled into the second step of a two step implementation at each time step.  Decoupling of the stabilization has the advantage of easily being incorporated into existing POD-G codes, and also that stabilization parameters can be adjusted only in the stabilization step (and not as part of the evolution equation).  We rigorously prove an error estimate for the model, in terms of the number of POD modes $r$, the stabilization parameters $R$ (number of modes not to add stabilization to) and $\nu_T$, as well as the time step size $\Delta t$ and the mesh width $h$ of the underlying FEM simulation that produced the POD modes.

Results from several numerical experiments are provided that show how effective the method can be.  In particular, we show for 2D channel flow past a step, POD-G has an energy growth that causes poor lift and drag prediction, especially for longer times.  The proposed VMS-POD is able to fix this by stabilizing so that the energy matches the DNS energy, which in turn leads to excellent lift and drag prediction, even up to $t=10$ (and from the plots, it appear the accurate predictions can continue for even longer time).  Finally, we tested the ability of the VMS-POD to produce stable solutions for turbulent channel flow.  With $r=10$ modes, POD-G quickly becomes unstable, however the VMS-POD is able to produce simulations with stable energy and that yield accurate mean streamwise velocity profiles.

\bibliographystyle{amsplain}
\bibliography{reference}

\providecommand{\bysame}{\leavevmode\hbox to3em{\hrulefill}\thinspace}
\providecommand{\MR}{\relax\ifhmode\unskip\space\fi MR }
\providecommand{\MRhref}[2]{%
  \href{http://www.ams.org/mathscinet-getitem?mr=#1}{#2}
}
\providecommand{\href}[2]{#2}
\begin{thebibliography}{10}

\bibitem{ALT93}
N.~Aubry, W-Y. Lian, and E.S. Titi, \emph{Preserving symmetries in the proper
  orthogonal decomposition}, SISSC \textbf{14} (1993), 483--505.

\bibitem{B93}
G.~Berkooz, P.~Holmes, and J.~Lumley, \emph{The proper orthogonal decomposition
  in the analysis of turbulent flows}, Annual Review of Fluid Mechanics
  \textbf{25} (1993), 539--575.

\bibitem{BIL06}
L.~Berselli, T.~Illiescu, and W.~Layton, \emph{Mathematics of large eddy
  simulation of turbulent flows}, Springer, 2005.

\bibitem{caiazzo2014numerical}
A.~Caiazzo, T.~Iliescu, V.~John, and S.~Schyschlowa, \emph{A numerical
  investigation of velocity-pressure reduced order models for incompressible
  flows}, J. Comput. Phys. \textbf{259} (2014), 598--616.

\bibitem{C00}
A.~Chatterjee, \emph{An introduction to the proper orthogonal decomposition},
  Current Science \textbf{78} (2000), 808�817.

\bibitem{L03}
L.~Cordier and M.~Bergmann, \emph{Proper orthogonal decomposition: an
  overview}, Post Processing of Experimental and Numerical Data \textbf{Lecture
  Series 2003/2004} (2003), 1--45.

\bibitem{GR79}
V.~Girault and P.~A. Raviart, \emph{Finite element approximation of the
  {Navier}-{Stokes} equations}, Lecture Notes in Mathematics 749,
  Springer-Verlag, Berlin, 1979.

\bibitem{Gue99}
J.-L. Guermond, \emph{Stabilization of {G}alerkin approximations of transport
  equations by subgrid modeling}, M2AN \textbf{33} (1999), 1293 -- 1316.

\bibitem{Hu95}
T.J.R. Hughes, \emph{Multiscale phenomena: {G}reen's functions, the
  {D}irichlet-to-{N}eumann formulation, subgrid-scale models bubbles and the
  origin of stabilized methods}, Comp. Meth. Appl. Mech. Engrg. \textbf{127}
  (1995), 387 -- 401.

\bibitem{TZ12}
T.~Iliescu and Z.~Wang, \emph{Variational multiscale proper orthogonal
  decomposition: {Navier-Stokes} equations}, Numer. Meth. Partial. Diff. Eqs.
  (2012), 641--663.

\bibitem{TZ13}
\bysame, \emph{Variational multiscale proper orthogonal decomposition:
  convection-dominated convection-diffusion-reaction equations}, Mathematics of
  Computation \textbf{82(283)} (2013), 1357--1378.

\bibitem{jokaya1}
V.~John and S.~Kaya, \emph{A finite element variational multiscale method for
  the {Navier}-{Stokes} equations}, SIAM J. Sci. Comput. \textbf{26} (2005),
  1485--1503.

\bibitem{jokaya2}
\bysame, \emph{Finite element error analysis of a variational multiscale method
  for the {N}avier-{S}tokes equations}, Adv. Comput. Math. \textbf{28} (2008),
  43--61.

\bibitem{jokaya3}
V.~John, S.~Kaya, and W.~Layton, \emph{A two-level variational multiscale
  method for convection-diffusion equations}, Comput. Meth. Appl. Mech. Engrg.
  \textbf{195} (2005), 4594--4603.

\bibitem{JR07}
V.~John and M.~Roland, \emph{Simulations of the turbulent channel flow at
  ${R}e_{\tau}$=180 with projection-based finite element variational multiscale
  methods}, International Journal for Numerical Methods in Fluids \textbf{55}
  (2007), 407--429.

\bibitem{KV01}
K.~Kunisch and S.~Volkwein, \emph{Galerkin proper orthogonal decomposition
  methods for parabolic problems}, Numerische Mathematik \textbf{90(1)} (2001),
  117�148.

\bibitem{SWZ13}
W.~Layton L.~Shan and H.~Zheng, \emph{Numerical analysis of modular {VMS}
  methods with nonlinear eddy viscosity for the {N}avier-{S}tokes equations},
  International journal of numerical analysis and modeling \textbf{10} (2013),
  943--971.

\bibitem{Lay02}
W.~Layton, \emph{A connection between subgrid scale eddy viscosity and mixed
  methods}, Appl. Math. Comput. \textbf{133} (2002), 147 -- 157.

\bibitem{WJL8}
\bysame, \emph{Introduction to the numerical analysis of incompressible viscous
  flows}, Society for Industrial and Applied Mathematics (SIAM), Philadelphia,
  USA, 2008.

\bibitem{LR12}
W.~Layton and L.~Rebholz, \emph{Approximate deconvolution models of turbulence:
  Analysis, phenomenology and numerical analysis}, Springer-Verlag, 2012.

\bibitem{LRT11}
W.~Layton, L.~R\"ohe, and H.~Tran, \emph{Explicitly uncoupled {VMS}
  stabilization of fluid flow}, Comput. Methods Appl. Mech. Engrg. \textbf{200}
  (2011), 3183--3199.

\bibitem{MKM99}
R.~Moser, J.~Kim, and N.~Mansour, \emph{Direct numerical simulation of
  turbulent channel flow up to ${Re}_{\tau}$=590}, Physics of Fluids
  \textbf{11} (1999), 943--945.

\bibitem{H96}
J.~L.~Lumley P.~Holmes and G.~Berkooz, \emph{Turbulence, coherent structures,
  dynamical systems and symmetry}, Cambridge, 1996.

\bibitem{RKB17}
L.~Rebholz, T.-Y. Kim, and Young-Li Byon, \emph{On an accurate $\alpha$ model
  for coarse mesh turbulent channel flow simulation}, Applied Mathematical
  Modelling \textbf{43} (2017), 139--154.

\bibitem{RZZ17}
L.~Rebholz, C.~Zerfas, and K.~Zhao, \emph{Global in time analysis and
  sensitivity analysis for the reduced {NS}-$\alpha$ model of incompressible
  flow}, Journal of Mathematical Fluid Mechanics \textbf{to appear} (2017).

\bibitem{JPR13}
J.~P. Roop, \emph{A proper-orthogonal decomposition variational multiscale
  approximation method for a generalized {Oseen} problem}, Advances in
  Numerical Analysis \textbf{Volume 2013} (2013), 1--8.

\bibitem{ST96}
M.~Sch{\"a}fer and S.~Turek, \emph{Benchmark computations of laminar flow
  around a cylinder}, Wiesbaden: Vieweg (1996), 547--566.

\bibitem{L87a}
L.~Sirovich, \emph{Turbulence and the dynamics of coherent structures part
  {I}.coherent structures}, Q. Appl. Math \textbf{45} (1987), 561--571.

\bibitem{L87c}
\bysame, \emph{Turbulence and the dynamics of coherent structures part {III}.
  dynamics and scaling}, Q. Appl. Math \textbf{45} (1987), 583--590.

\bibitem{L87b}
\bysame, \emph{Turbulence and the dynamics of coherent structures part
  {II}.symmetries and transformations}, Q. Appl. Math \textbf{45} (1987),
  573--582.

\bibitem{XWWI17}
X.~Xie, Z.~Wang, D.~Wells, and T.~Iliescu, \emph{Numerical analysis of the
  {Leray} reduced order model}, submitted (2017).

\end{thebibliography}
\end{document}